\documentclass[review]{elsarticle}
\usepackage{lineno,hyperref}
\usepackage{amssymb, amsmath, amsthm}
\newtheorem{theorem}{Theorem}[section]
\newtheorem{definition}{Definition}[section]
\newtheorem{lemma}{Lemma}[section]
\newtheorem{corollary}{Corollary}[section]

\newtheorem{Remark}{Remark}[section]
\newtheorem{Note}{Note}[section]

\modulolinenumbers[5]

\journal{Journal of \LaTeX\ Templates}
\newcommand{\least}{\let\cs=\@currsize\renewcommand{\baselinestretch}{.9}\tiny\CS}
\textheight 9.6in
\textwidth 6.8in
\topmargin -.7in
\oddsidemargin -.2in
\evensidemargin .2in 








\bibliographystyle{elsarticle-num}
\begin{document}
	\begin{frontmatter}
		\title{ Invariant bilinear forms under   the rigid motions of a regular polygon  }
		\author{Dilchand Mahto and Jagmohan Tanti*}
		
		
		\address{Department of Mathematics, Central University of Jharkhand, Ranchi-835205, Jharkhand, India\\
			Department of Mathematics, Babasaheb Bhimrao Ambedkar University, Lucknow-226025, Uttar Pradesh, India\\
		*Corresponding author.\\ E-mail: dilchandiitk@gmail.com; jagmohan.t@gmail.com}







\begin{abstract}
 In this paper, for $n$ a  positve integer, we compute the number of $n$ degree representations for a dihedral  group $G$ of order $2m$, $m\geq3$ and the dimensions of the  corresponding spaces of $G$ invariant bilinear forms  over a complex field  $\mathbb{C}$. We explicitly discuss about the existence of a non-degenerate  invariant  bilinear form. The results are important due to their applications in the studies of physical sciences.
\end{abstract}

\begin{keyword}
\texttt  Bilinear  forms; Representation theory;  Vector spaces;  Direct sums; Semi direct product.
\MSC[2010] 15A63\sep  11E04 \sep 06B15 \sep 15A03.
\end{keyword}

\end{frontmatter}


\section{Introduction}
Representation theory enables the study of a group  as operators on certain vector spaces and an orthogonal group with respect to a corresponding bilinear space. Also since last several decades the search of non-degenerate invariant bilinear forms has remained of great interest. Such type of studies acquires an important place in quantum mechanics and other branches of physical sciences. \\
\vskip.5mm
Let $G$ be a finite group and $\mathbb{V}$ a vector space over a field $\mathbb{F}$, then we have the following.
\begin{definition}
	A homomorphism  $\rho$ : $G$ $\rightarrow$ GL($\mathbb{V}$) is called a representation of the group $G$.  $\mathbb{V}$ is also called a representing space of $G$. The dimension of   $\mathbb{V}$ over $\mathbb{F}$ is called degree of the representation $\rho$.
\end{definition}
	Let  $\rho$ : $G$ $\rightarrow$ GL($\mathbb{V}$) be a representation.
\begin{definition}
 A bilinear form  on  $\mathbb{V}$ is said to be invariant under the representation $\rho$  if 
	$$\mathbb{B}(\rho(g)x,\rho(g)y)=\mathbb{B}(x,y), \mbox{ $\forall$ g $\in$ $G$ and x,y $\in\mathbb{V}$}.$$\\
	For the basic properties of a bilinear form one can refer to \cite{Hoff}.
\end{definition}

\noindent Let $\Xi$ denotes the space of bilinear forms on the vector space  $\mathbb{V}$ over $\mathbb{F}$.  
\begin{definition} The space of invariant bilinear forms under the representation $\rho$ is given by
	$$\Xi_{G} = \{\mathbb{B} \in \Xi \,\, |\, \mathbb{B}(\rho(g)x,\rho(g)y)=\mathbb{B}(x,y), \mbox{ $\forall$ $g$ $\in$ $G$ and x,y $\in$ $\mathbb{V}$}\}.$$
	It is obvious that $\Xi_G$ is a subspace of $\Xi$.
\end{definition}
\noindent The  representation ($\rho$, $\mathbb{V}$)  is irreducible \cite{DCT} of degree $n$ if and only if $\{0\}$ and  $\mathbb{V}$ are the only invariant  subspaces of $\mathbb{V}$ under  $\rho$.  Let $r$ be  the  number of  conjugacy classes of  $G$. If  $\mathbb{F}$  is algebraically closed and char( $\mathbb{F}$) is $0$ or not dividing $|G|$, by Frobenius (see pp 319, Theorem (5.9) \cite{Artin})  there are  $r$    irreducible representations $\rho_i$ (say), $1 \leq i \leq r$ of $G$ and $\chi_i$ (say) is the corresponding character of $\rho_i$.  
Also by Maschke's theorem ( see pp 316,  corollary (4.9) \cite{Artin}) every $n$ degree representation of $G$ can be written as a direct sum of copies of irreducible representations. For  $\rho$ = $\oplus_{i=1}^{r}k_{i}\rho_{i}$ an $n$ degree representation of $G$, the coefficient of $\rho_{i}$ is $k_{i}$, $1 \leq i \leq r$, so that $\sum_{i=1}^{r} d_{i}k_{i}=n, $ and $\sum_{i=1}^{r} d_{i}^{2}= |G|, $ where $d_{i}$ is the degree of $\rho_{i}$  and $d_{i}||G|$ \cite{Serr} with  $d_{i'}\geq d_{i}$ when $i'>i$. It is already well understood in the literature that  the   invariant space $\Xi_{G}$ under $\rho$ can be expressed by the set  $\Xi_{G}'$ = $\{X \in \mathbb{M}_{n}(\mathbb{F})\,|\, C_{\rho(g)}^{t}X C_{\rho(g)} = X, \forall g \in G  \}$ with respect to an ordered basis $\underline{e}$ of $\mathbb{V}$, where  $\mathbb{M}_{n}(\mathbb{F})$ is the set of square matrices of order $n$ with entries from $\mathbb{F}$ and  $C_{\rho(g)}=[\rho(g)]_{\underline{e}}$ is the matrix representation of the linear transformation $\rho(g)$ with respect to $\underline{e}$.\\

 In this paper our investigation pertains to the following questions.\\
\\
\textbf{Question.} How many $n$ degree  representations (upto isomosphism) of  $G$  can be  there ?  Distinguish which  of them are  faithful representations   ? What is the dimension of  $\Xi_{G}$  for every $n$ degree representation ? What are the necessary and sufficient conditions for the existence of a non-degenerate invariant bilinear form.\\

These questions have been studied  by many people in the distinct perspectives.  Gongopadhyay and Kulkarni \cite{Gong2} investigated the  existence of T-invariant non-degenerate symmetric (resp. skew-symmetric) bilinear forms. Kulkarni and Tanti \cite{Kulk} investigated the  dimension of space of  T-invariant bilinear forms.  Gongopadhyay,  Mazumder and Sardar \cite{Gong} investigated for  an invertible linear map $T : V \rightarrow V$, when does the vector space
$V$ over $F$ admit a T-invariant non-degenerate c-hermitian form.  Chen \cite{Chen} discussed the all matrix representations of the real numbers. Authors  investigated  the dimensions of invariant spaces and explicitly discussed about the existence of  the non-degenerate invariant bilinear forms under $n$ degree representations of a group of order $p^3$, with prime $p$ [\cite{DCT},\cite{DCJT}]. Sergeichuk \cite{VVSE} studied systems of forms
and linear mappings by associating with them self-adjoint representations of a category with involution. Frobenius \cite{GFRO} proved that every endomorphism of a
finite dimensional vector space V is self-adjoint for at least one non-degenerate symmetric
bilinear form on V. Later, Stenzel \cite{HSTE} determined when an endomorphism could be skew-
self adjoint for a non-degenerate quadratic form, or self-adjoint or skew-self adjoint for a
symplectic form on complex vector spaces. However his results were later generalized to
an arbitrary field \cite{RGTL}. Pazzis \cite{CSPA} tackled the case of the automorphisms of a finite dimensional vector space that are orthogonal (resp. symplectic) for at least one non-degenerate
quadratic form (resp. symplectic form) over an arbitrary field of characteristics 2.\\
Let  $D_m$ be a dihedral group of order $2m, m \in \mathbb{Z^+}$  and   ($\rho$, $\mathbb{V}$)  an $n$ degree representation of $D_m$ over  $\mathbb{C}$.\\

In this paper we investigate about the  counting of $n$ degree representations of   $D_m$ with $m\geq3$ over $\mathbb{C}$,  dimensions of their corresponding  spaces of invariant bilinear forms and establish a characterization criteria for the existence of a non-degenerate invariant  bilinear form. Our  investigations  are stated in the following four main theorems. \\
\begin{theorem}
	The number of  n degree representations (upto isomorphism) of  $D_m$, $m\geq3$   is  $$\sum_{s=0}^{[\frac{n}{2}]} \binom{s+[\frac{m-3}{2}]}{[\frac{m-3}{2}]}\binom{n-2s+2|Z(G)|-1}{2|Z(G)|-1}.$$
	\label{theorem1.1}
\end{theorem}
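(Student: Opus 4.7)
The plan is to reduce the counting to a stars-and-bars computation after identifying the irreducible representations of $D_m$ and grouping them by degree.

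First I would recall that the irreducible complex representations of $D_m$, $m \geq 3$, all have degree $1$ or $2$. The split depends on the parity of $m$: if $m$ is odd, there are exactly $2$ one-dimensional irreducibles and $(m-1)/2$ two-dimensional irreducibles, while if $m$ is even, there are $4$ one-dimensional irreducibles and $(m-2)/2$ two-dimensional irreducibles. The key observation that makes the formula uniform is that in both cases the number of one-dimensional irreducibles equals $2|Z(D_m)|$ (since $|Z(D_m)|=1$ for $m$ odd and $|Z(D_m)|=2$ for $m$ even) and the number of two-dimensional irreducibles equals $\left[\tfrac{m-3}{2}\right]+1$ (checking $m$ odd gives $(m-1)/2$, $m$ even gives $(m-2)/2$). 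A short dimension/character check, or direct inspection of the standard construction of $D_m$'s irreps, verifies this.

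Next I would invoke Maschke's theorem as stated in the introduction: every $n$-degree representation $\rho$ of $D_m$ over $\mathbb{C}$ decomposes uniquely (up to isomorphism) as $\bigoplus_i k_i \rho_i$, where the $\rho_i$ run over the irreducibles and $k_i \in \mathbb{Z}_{\geq 0}$. Two representations are isomorphic iff their multiplicity vectors $(k_i)$ agree. Therefore counting $n$-degree representations up to isomorphism is the same as counting non-negative integer solutions to $\sum_i d_i k_i = n$ with $d_i \in \{1,2\}$.

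I would then stratify the count by the total multiplicity $s$ contributed by the two-dimensional irreducibles, i.e.\ $s = \sum_{d_i=2} k_i$, which forces $\sum_{d_i=1} k_i = n - 2s$ and hence $0 \le s \le [n/2]$. By the classical stars-and-bars formula, the number of ways of distributing the mass $s$ among the $\left[\tfrac{m-3}{2}\right]+1$ two-dimensional irreducibles is $\binom{s+[\tfrac{m-3}{2}]}{[\tfrac{m-3}{2}]}$, and the number of ways of distributing $n-2s$ among the $2|Z(D_m)|$ one-dimensional irreducibles is $\binom{n-2s+2|Z(G)|-1}{2|Z(G)|-1}$. Multiplying and summing over $s$ gives the claimed formula.

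The computation is essentially routine; the only thing that requires a little care is the uniform presentation of both parity cases for $m$ via the floor function and via $|Z(G)|$, so I would verify that presentation explicitly for $m$ odd and $m$ even at the start to avoid having to split into cases throughout the stars-and-bars step. No genuine obstacle arises beyond this bookkeeping.
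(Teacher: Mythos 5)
Your proposal is correct and follows essentially the same route as the paper: decompose $\rho$ via Maschke's theorem into multiplicities $(k_i)$, reduce to counting non-negative solutions of $\sum_i d_i k_i = n$, stratify by the total multiplicity $s$ of the degree-$2$ irreducibles, and apply stars-and-bars to each of the two degree classes. Your explicit verification that the counts of degree-$1$ and degree-$2$ irreducibles are $2|Z(D_m)|$ and $\left[\tfrac{m-3}{2}\right]+1$ in both parity cases is exactly the bookkeeping the paper relies on.
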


\begin{theorem}
	The space $\Xi_{D_m}$, with $m \geq 3$  under an n degree representation $(\rho ,\mathbb{V})$ over $\mathbb{C}$ is isomorphic to  the direct sum of  the subspaces $\mathbb{W}_{(G,k_{i}\rho_{i})}$ of $\mathbb{M}_{n}(\mathbb{C})$, i.e., $\Xi_{G}'$ = $\bigoplus_{i=1}^{r} \mathbb{W}_{(G,k_{i}\rho_{i})}$ or  $ \Xi_{G} \cong \bigoplus_{i=1}^{r}\mathbb{M}_{k_i}(\mathbb{C})$, where $\mathbb{W}_{(G,k_{i}\rho_{i})}$ = \{ $ X \in \mathbb{M}_{n}(\mathbb{C})\, |\, X = Diag[O^{11}_{d_{1}k_{1}}, . ... ....,  X_{d_{i}k_{i}}^{ii},  ... .......,  O^{rr}_{d_{r}k_{r}}   ]$ with  $X_{d_{i}k_{i}}^{ii}$  a square matrix of order $d_{i}k_{i} $ satisfying $X_{d_{i}k_{i}}^{ii}=  C_{k_{i}\rho_{i}(g)}^{t}X_{d_{i}k_{i}}^{ii}C_{k_{i}\rho_{i}(g)}$, $\forall g \in D_m$ \}.
	Also   dimension of $\mathbb{W}_{(G,k_{i}\rho_{i})}= k_{i}^{2}$.
	
 

		  
\label{theorem1.2}
\end{theorem}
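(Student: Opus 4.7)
The plan is to choose an ordered basis of $\mathbb{V}$ adapted to the decomposition $\rho = \bigoplus_{i=1}^{r} k_i \rho_i$, translate the matrix equation $C_{\rho(g)}^{t} X C_{\rho(g)} = X$ into a block-by-block system, and then annihilate the off-diagonal blocks using Schur's lemma together with the self-duality of every irreducible representation of $D_m$. Relative to such an ordered basis, each $C_{\rho(g)}$ is block-diagonal with blocks $C_{k_i \rho_i(g)}$ of size $d_i k_i$, and each such block is itself block-diagonal with $k_i$ identical copies of $C_{\rho_i(g)}$. Partitioning $X = [X^{ij}]$ with $X^{ij}$ of size $d_i k_i \times d_j k_j$, and further $X^{ij} = [X^{ij}_{ab}]$ into sub-blocks $X^{ij}_{ab}$ of size $d_i \times d_j$ for $1 \le a \le k_i$ and $1 \le b \le k_j$, the invariance equation becomes
\[ C_{\rho_i(g)}^{t}\, X^{ij}_{ab}\, C_{\rho_j(g)} = X^{ij}_{ab} \quad \text{for all } g \in D_m \text{ and all indices } i,j,a,b. \]

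I would then observe that each sub-block equation is precisely the condition that the bilinear pairing with matrix $X^{ij}_{ab}$ is invariant under $(\rho_i,\rho_j)$, equivalently that it defines a $G$-homomorphism from the space of $\rho_i$ into the dual $\rho_j^{*}$. The crucial input specific to $D_m$ is that every complex character of $D_m$ is real-valued: the one-dimensional characters lie in $\{\pm 1\}$, and the two-dimensional ones take the form $2\cos(2\pi k\ell/m)$ on rotations and $0$ on reflections. Hence $\rho_j^{*} \cong \rho_j$ as $D_m$-modules for every $j$, and Schur's lemma forces $X^{ij}_{ab} = 0$ whenever $i \ne j$, so $X^{ij} = 0$ for $i \ne j$. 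This yields the claimed decomposition $\Xi_{G}' = \bigoplus_{i=1}^{r} \mathbb{W}_{(G,k_i\rho_i)}$.

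For the remaining diagonal blocks, the same correspondence identifies $\mathbb{W}_{(G,k_i\rho_i)}$ with $\mathrm{Hom}_{D_m}(k_i \rho_i,\,(k_i \rho_i)^{*})$; applying $\rho_i^{*} \cong \rho_i$ once more turns this into $\mathrm{End}_{D_m}(k_i \rho_i)$, which by Schur is isomorphic to $\mathbb{M}_{k_i}(\mathbb{C})$ and so has dimension $k_i^{2}$. Assembling the pieces delivers $\Xi_{G} \cong \bigoplus_{i=1}^{r} \mathbb{M}_{k_i}(\mathbb{C})$. I expect the main obstacle to be the vanishing of the off-diagonal blocks: it rests entirely on the self-duality of the irreducible representations of $D_m$, so I would verify the reality of the character table of $D_m$ explicitly before invoking Schur. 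Everything else amounts to routine linear algebra on block matrices.
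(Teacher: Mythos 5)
Your proof is correct, and it is in fact more substantive than the one the paper gives. The paper's own argument takes the block-diagonal shape of every $X\in\Xi_G'$ as already established (it cites Remark 4.1, where that shape is simply asserted), and then spends its effort on the routine verification that the decomposition $X=\sum_i X_i$ into single-block pieces exists and is unique, reading off $\dim \mathbb{W}_{(G,k_i\rho_i)}=k_i^2$ by counting the ``free variables'' displayed in the remark. You instead derive the key facts: partitioning $X$ into $d_i\times d_j$ sub-blocks, identifying each sub-block equation $C_{\rho_i(g)}^{t}X^{ij}_{ab}C_{\rho_j(g)}=X^{ij}_{ab}$ with an intertwiner $\rho_i\to\rho_j^{*}$, checking that every character of $D_m$ is real so that $\rho_j^{*}\cong\rho_j$, and invoking Schur's lemma to kill the off-diagonal blocks and to show each diagonal sub-block contributes exactly one parameter, giving $\mathbb{W}_{(G,k_i\rho_i)}\cong\mathrm{End}_{D_m}(k_i\rho_i)\cong\mathbb{M}_{k_i}(\mathbb{C})$. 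What your route buys is a complete justification of the step the paper leaves unproved (the vanishing of the $i\neq j$ blocks and the precise count $k_i^2$), and it generalizes verbatim to any finite group all of whose irreducible complex representations are self-dual; what the paper's route buys is brevity, at the cost of resting on an unproven structural remark. One small point worth making explicit in your write-up: for $i=j$ you also need that the one-dimensional space of invariant forms on a single copy of $\rho_i$ is spanned by a \emph{non-degenerate} form (a scalar for $d_i=1$, a multiple of $I_2$ for $d_i=2$), which follows from the same Schur computation and is what makes the identification with $\mathbb{M}_{k_i}(\mathbb{C})$ concrete.
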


\begin{theorem}
	
 If $G$ is a dihedral group,  then   an $n$ degree representation $\sigma$ of $G$ 
 admits   a non-degenerate invariant bilinear form if and only if every irreducible representation admits  a non-degenerate invariant bilinear form.\\

\label{theorem1.3}
\end{theorem}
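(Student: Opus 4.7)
The plan is to exploit the block-diagonal structure of invariant bilinear forms established in Theorem \ref{theorem1.2}, and then reduce the non-degeneracy question on each isotypic component to the irreducible case via Schur's lemma. Throughout, write $\sigma = \bigoplus_{i=1}^{r} k_{i}\rho_{i}$ and let $V_{i}$ denote the representing space of $\rho_{i}$.

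First I would fix a basis of $\mathbb{V}$ adapted to the isotypic decomposition. By Theorem \ref{theorem1.2}, every $B \in \Xi_{G}$ corresponds to a block-diagonal matrix $X = \mathrm{Diag}[X^{11}_{d_{1}k_{1}}, \ldots, X^{rr}_{d_{r}k_{r}}]$. A block-diagonal matrix is invertible if and only if each diagonal block is invertible, so $B$ is non-degenerate if and only if $X^{ii}$ is invertible for every $i$ with $k_{i}\geq 1$. This reduces the problem to showing, for each such $i$, that $k_{i}\rho_{i}$ admits a non-degenerate invariant bilinear form if and only if $\rho_{i}$ does.

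For the reverse direction, an arbitrary $G$-invariant bilinear form on $V_{i}^{k_{i}}$ decomposes as $B((x_{j}),(y_{l})) = \sum_{j,l=1}^{k_{i}} B_{jl}(x_{j},y_{l})$, where each $B_{jl}$ is a $G$-invariant bilinear form on $V_{i}$. By Schur's lemma and irreducibility, the radical of any non-zero $G$-invariant bilinear form on $V_{i}$ is a proper $G$-invariant subspace, hence zero, so any such form is automatically non-degenerate; in particular $\dim_{\mathbb{C}} \Xi_{\rho_{i}} \in \{0,1\}$. If $\Xi_{\rho_{i}}=\{0\}$, every $B_{jl}$ vanishes and $B=0$ is degenerate; if $\dim \Xi_{\rho_{i}}=1$ with generator $B_{i}$, then $B_{jl}=a_{jl}B_{i}$ and $B$ is non-degenerate if and only if $(a_{jl}) \in \mathbb{M}_{k_{i}}(\mathbb{C})$ is invertible, which in particular forces $B_{i}$ itself to be non-degenerate. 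The forward direction is immediate: given a non-degenerate $G$-invariant $B_{i}$ on each occurring $V_{i}$, the diagonal form $\tilde{B}((x_{j}),(y_{l})) = \sum_{j=1}^{k_{i}} B_{i}(x_{j},y_{j})$ on $V_{i}^{k_{i}}$ is invariant and non-degenerate; assembling these across all $i$ yields a non-degenerate invariant bilinear form on $\sigma$.

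The crux is the all-or-nothing dichotomy for invariant bilinear forms on an irreducible $\rho_{i}$; after that, everything assembles via Theorem \ref{theorem1.2}. I do not anticipate further obstacles, since the decomposition $B=\sum B_{jl}$ is purely formal and the dichotomy follows at once from Schur's lemma applied to the radical.
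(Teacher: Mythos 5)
Your proof is correct, but it takes a genuinely different route from the paper's. The paper disposes of Theorem \ref{theorem1.3} by showing that \emph{both} sides of the equivalence hold unconditionally for dihedral groups: Lemma \ref{nondeg} constructs a non-degenerate invariant form for every $n$ degree representation (by choosing each diagonal block in Remark 4.1 non-singular, which is possible because every dihedral irreducible is realized by real orthogonal matrices and hence preserves the standard form), and Lemma \ref{lemma43} does the same for every irreducible; the ``if and only if'' is then vacuously true, with Lemma \ref{singular} supplying the block-diagonal invertibility criterion. You instead prove the logical equivalence itself: you use the isotypic block structure from Theorem \ref{theorem1.2} to reduce to a single isotypic component $k_i\rho_i$, decompose an invariant form there as a $k_i\times k_i$ matrix of invariant forms on $V_i$, and invoke Schur's lemma twice --- once to show any non-zero invariant form on an irreducible has trivial radical, and once to show $\dim \Xi_{\rho_i}\le 1$, so that non-degeneracy on the component is equivalent to invertibility of the coefficient matrix $(a_{jl})$ together with non-vanishing of the generator $B_i$. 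What your approach buys is generality: it establishes the theorem for any finite group over $\mathbb{C}$ (indeed over any algebraically closed field of characteristic not dividing $|G|$), without using the self-duality of dihedral irreducibles, and it explains \emph{why} the statement is an equivalence rather than observing that both sides happen to be true. What the paper's approach buys is the stronger conclusion, special to $D_m$, that a non-degenerate invariant form always exists. Your argument is complete as it stands; the only cosmetic point is that ``every irreducible representation'' in the statement should be read as ``every irreducible constituent of $\sigma$'' for your reduction to match the claim exactly (for dihedral groups the two readings coincide since all irreducibles are self-dual).
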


\begin{theorem}
\begin{enumerate}
	\item The number of faithful irreducible representations of a dihedral group $D_m$ is $$[\frac{m-1}{2}]-(l_2+1)(l_3+1)..........(l_p+1)+|Z(G)|+1.$$
	
	\item If $\rho$ is a finite degree representation of $D_m$, then  $\rho(D_m)$  is isomorphic to  either   $\mathbb {Z}_1$, $\mathbb {Z}_2$ or $D_{lcm\{ \frac{m}{(m,t)}|{t^{th }_{\rho}irrep}\}}$, where $t^{th}_{\rho}irrep$  stands for the  $t^{th}$ irreducible representation of degree 2 appearing in $\rho$.
	\end{enumerate}	
\label{thm1.4}  
\end{theorem}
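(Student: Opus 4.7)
For Part (1), the plan is to enumerate the irreducible representations of $D_m$, identify which are faithful, and match the count to the stated formula. With the presentation $D_m = \langle r,s \mid r^m = s^2 = 1,\ srs = r^{-1}\rangle$, the irreducibles split into $2|Z(D_m)|$ linear characters (two if $m$ is odd, four if $m$ is even), none of which can be faithful since $D_m$ is non-abelian for $m\geq 3$, together with the $[(m-1)/2]$ two-dimensional representations $\rho_h$ for $1 \leq h \leq [(m-1)/2]$, where $\rho_h(r)$ is rotation by $2\pi h/m$ and $\rho_h(s)$ is a reflection. Since $\rho_h(r)$ has order $m/\gcd(h,m)$ and $\rho_h(s)$ is a nontrivial involution, $\rho_h$ is faithful precisely when $\gcd(h,m)=1$. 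The pairing $h \leftrightarrow m-h$, which preserves $\gcd(\cdot,m)$, shows that the count of such $h$ in $\{1,\ldots,[(m-1)/2]\}$ equals $\phi(m)/2$.

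The remaining step in Part (1) is to rewrite $\phi(m)/2$ as $\left[\frac{m-1}{2}\right] - d(m) + |Z(D_m)| + 1$, where $d(m) = (l_2+1)(l_3+1)\cdots (l_p+1)$ denotes the number of divisors of $m$. Equivalently, one must count the non-faithful two-dimensional irreducibles---those $h \in \{1,\ldots,[(m-1)/2]\}$ with $\gcd(h,m) > 1$---and show this equals $d(m) - |Z(D_m)| - 1$. My plan is to stratify such $h$ by the value $d = \gcd(h,m)$, which runs over divisors $d \mid m$ with $1 < d$; to parametrize $h = dk$ with $\gcd(k,m/d) = 1$ and $k$ in an appropriate range; to fold via the symmetry $h \leftrightarrow m-h$; and to treat the midpoint $h = m/2$ separately, since it exists only when $m$ is even, precisely the case $|Z(D_m)| = 2$. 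The resulting sum should telescope to the stated divisor expression; this number-theoretic bookkeeping is the main obstacle of the proof.

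For Part (2), the plan is to compute $\rho(D_m) \cong D_m / \ker\rho$ from $\ker\rho = \bigcap_{i:\, k_i > 0} \ker\rho_i$. The analysis of Part (1) gives $\ker\rho_h = \langle r^{m/\gcd(h,m)}\rangle$ for each two-dimensional constituent $\rho_h$ (since $s \not\in \ker\rho_h$), while every linear character has kernel containing $[D_m,D_m] = \langle r^2 \rangle$. When $\rho$ has no two-dimensional constituent with $k_i > 0$, $\rho$ factors through the abelianization of $D_m$ and the image is $\mathbb{Z}_1$ or $\mathbb{Z}_2$. When at least one two-dimensional constituent appears, indexing them by $t$ and setting $a_t = m/\gcd(h_t,m)$, the identity $\bigcap_t \langle r^{a_t}\rangle = \langle r^{\operatorname{lcm}_t a_t}\rangle$ inside the cyclic group $\langle r\rangle$ yields $\ker\rho \cap \langle r\rangle = \langle r^L\rangle$ with $L = \operatorname{lcm}_t\{m/\gcd(h_t,m)\}$; combined with $s \not\in \ker\rho$, the quotient is dihedral of order $2L$, i.e. $\rho(D_m) \cong D_L$. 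This structural step is essentially formal once the kernels of the irreducibles are in hand.
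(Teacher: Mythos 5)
Your reduction of Part (1) to the identity $\phi(m)/2=\left[\frac{m-1}{2}\right]-(l_2+1)\cdots(l_p+1)+|Z(D_m)|+1$ is exactly where the argument stops, and the ``number-theoretic bookkeeping'' you defer cannot be completed: the identity is false. Your count of $\phi(m)/2$ faithful irreducibles is correct, but the non-faithful two-dimensional irreducibles --- the $h\in\{1,\dots,[\frac{m-1}{2}]\}$ with $\gcd(h,m)>1$ --- are not in bijection with the divisors of $m$ strictly between $1$ and $m/2$; an $h$ can fail to be coprime to $m$ without dividing $m$. For $m=15$ the non-coprime values are $h=3,5,6$ (three of them) while the relevant divisors are only $3,5$, so the stated formula gives $7-4+1+1=5$ against the true count $\phi(15)/2=4$; for $m=10$ it gives $3$ against the true count $2$. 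Stratifying by $d=\gcd(h,m)$ produces a sum of terms of the shape $\tfrac12\phi(m/d)$ over divisors $d$ with $1<d<m$, and this does not telescope to $(l_2+1)\cdots(l_p+1)-|Z(D_m)|-1$. (The paper's own proof, via its Theorem 3.1, commits precisely the conflation you would need to avoid: it counts divisors of $m$ in $(1,m/2)$ where it should count non-coprime residues, i.e.\ it counts distinct kernels rather than representations.) The honest endpoint of your Part (1) argument is that the number of faithful irreducibles equals $\phi(m)/2$, which refutes the stated formula rather than establishing it.

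Part (2) has a separate gap. The kernel $\ker\rho=\bigcap_{i:\,k_i>0}\ker\rho_i$ runs over \emph{all} constituents, but your computation $\ker\rho\cap\langle r\rangle=\langle r^{L}\rangle$ uses only the two-dimensional ones. When $m$ is even, a linear character with $\chi(r)=-1$ can cut the kernel further: for $D_6$, take $\rho$ to be the sum of the linear character sending $r\mapsto -1$, $s\mapsto -1$ and the two-dimensional irreducible with $t=2$ (whose kernel is $\langle r^{3}\rangle$, so $L=3$). Then $r^{3}$ is not killed by the linear character, $\ker\rho$ is trivial, and $\rho(D_6)\cong D_6$, not $D_{3}$ as the formula predicts. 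Similarly, a sum of two distinct nontrivial linear characters of $D_m$ with $m$ even has image $\mathbb{Z}_2\times\mathbb{Z}_2$, not $\mathbb{Z}_1$ or $\mathbb{Z}_2$. So the structural step you describe as ``essentially formal'' is only valid when $m$ is odd or when no linear character with $\chi(r)=-1$ occurs; as stated, both parts of the theorem admit counterexamples, and no proof along your (or the paper's) lines can close these gaps without amending the statement.
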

\begin{Remark}
	Thus we get the necessary and sufficient condition for the  existence of a non-degenerate invariant bilinear form under an n degree representation of a dihedral group over $\mathbb{C}$.
\end{Remark}
\section{Preliminaries }
 Symmetries  are the rigid motions of an m-sided  regular  polygon with  $m \in \mathbb{N}$. Considering a  circle $C(0,R)$ with radius $R$ and centered at origin on a real plane, if the point $(R,0)$ is rotated counter- clockwise by an angle $\frac{2\pi}{m}$ radian ( or resp. clockwise) around the  circle, which  returns back to itself after $m$  successive rotations,  while these rotations, the set of points $\{(RCos0,RSin0), (RCos\frac{2\pi}{m},RSin\frac{2\pi}{m}), ..$ $.........., (RCos\frac{2(m-1)\pi}{m},RSin\frac{2(m-1)\pi}{m}) \}$ lies on the circle forming the  vertices of an  m-sided regular  polygon $P_m$ say, which is a symmetric regular polygon. Let $a$ be the rotation by angle $\frac{2 \pi}{m}$ counter-clockwise then for $1 \leq s \leq m$, rotating it counter-clockwise by an   angle of $\frac{2s\pi}{m}$ radian, we get  $"a^s "$. Thus we get $m$  symmetric regular  m-gons namely $a,a^2, a^3, .... ...., a^{m-1}, a^m =P_m$, the last one $a^m$ is the  original regular m-gon. When $m$ is  odd,  for   $1 \leq s \leq m$,  the reflection about the angle bisector of the vertex $(RCos\frac{2s\pi}{m},RSin\frac{2s\pi}{m})$ of  the regular m-gon $a^s$ is also a symmetric regular m-gon $b$ (say), whereas if $m$ is even, the reflections about  the side bisectors as well as  the angle bisectors of the  vertices $(RCos\frac{2s\pi}{m},RSin\frac{2s\pi}{m}), 1 \leq s \leq m$ of  the regular m-gon $a^s$ is symmetric with $a^m$ \cite{KCON}. Thus there is a set of   $2m$ symmetric regular m-gons $\{a,a^2 ,........,a^m, b, ab,a^2b, .........,a^{m-1}b\}$, which  forms a group with the operation being compositions of m-gons and known as a dihedral group of order $2m$   denoted by $D_m$.  Centre of $D_m$,    $Z(D_m)= \{a^m\}$ or $ \{a^m, a^{\frac{m}{2}}\}$ according to $m$ is odd or even respectively and the number of the  conjugacy classes is $r= 2|Z(D_m)| +[\frac{m-1}{2}]$, which is equal to the  number of irreducible representations (over an algebraicaly closed field $\mathbb{F}$, char($\mathbb{F})=0$ or not dividing $2m$)   with degree $d_i$. Here  $d_i | |G|$ and $\sum_{i=1}^{r} d_i^2 =|G|$.  There are $ 2|Z(D_m)|$ representations of degree $1$ and $\big[\frac{m-1}{2}\big]$  representations of degree $2$ for  $D_m$. In the next section, we formulate all $r$  irreducible representations of $D_m$  such that $C_{\rho_i(g)}$ is either a rotation operator or a reflection operator. \\

\begin{definition}
	The  character  of $\rho$ is a  function    $\chi$ : $G$ $\rightarrow$ $\mathbb{F}$,  $\chi(g)=$ tr( $[\rho(g)]_{\underline{e}}$ ) and is also called character of the group $G$.
\end{definition}
\begin{theorem}
	(Maschke's Theorem): If char($\mathbb{F}$) does not divide $|G |$, then every representation of  $G$ is a direct sum of irreducible representations.
\end{theorem}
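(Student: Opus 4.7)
The plan is to prove the (stronger) statement that every $G$-invariant subspace $W$ of a representation $\mathbb{V}$ admits a $G$-invariant linear complement; the theorem then follows by induction on $\dim \mathbb{V}$. Indeed, if $\mathbb{V}$ is irreducible there is nothing to prove, and otherwise any proper nonzero invariant $W$ yields a decomposition $\mathbb{V} = W \oplus W'$ with both summands $G$-invariant and of strictly smaller dimension, to which the induction hypothesis applies.

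To produce an invariant complement of a given invariant $W$, I would first pick any (not necessarily invariant) linear complement and let $p : \mathbb{V} \to \mathbb{V}$ be the linear projection with $\mathrm{Im}(p) = W$ along this chosen complement. The key idea is to symmetrise $p$ over $G$ by setting
$$
\tilde p \;=\; \frac{1}{|G|}\sum_{g \in G} \rho(g)\, p\, \rho(g)^{-1}.
$$
This is exactly the point at which the hypothesis $\mathrm{char}(\mathbb{F}) \nmid |G|$ enters: it guarantees that $|G|$ is invertible in $\mathbb{F}$, so that the averaged operator is well-defined.

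The remaining verifications are short. First, $\tilde p(\mathbb{V}) \subseteq W$, because $W$ is $G$-invariant and each summand $\rho(g) p\, \rho(g)^{-1}$ carries $\mathbb{V}$ into $\rho(g) W = W$. Second, $\tilde p$ fixes $W$ pointwise: for $w \in W$ we have $\rho(g)^{-1} w \in W$, and $p$ acts as the identity on $W$, so each summand returns $w$. Together these give $\tilde p^2 = \tilde p$ with image exactly $W$. Third, a one-line reindexing $g \mapsto hg$ shows that $\rho(h)\, \tilde p = \tilde p\, \rho(h)$ for every $h \in G$, so $\ker \tilde p$ is a $G$-invariant subspace. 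Consequently $\mathbb{V} = W \oplus \ker \tilde p$ is a decomposition of $G$-representations.

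The main obstacle is conceptual rather than computational: one must realise that a naively chosen projection need not be $G$-equivariant and must therefore be averaged, and one must confirm that the averaging makes sense in $\mathbb{F}$, which is precisely the content of the characteristic hypothesis. Beyond that, the three required properties of $\tilde p$ fall out by straightforward manipulation, and the induction on dimension delivers the statement. Without the hypothesis $\mathrm{char}(\mathbb{F}) \nmid |G|$ the construction breaks down, and the conclusion itself is known to fail — as illustrated by modular representation theory.
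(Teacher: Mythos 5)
Your proof is correct and complete: the averaging of a projection onto the invariant subspace, the verification that the averaged operator is an idempotent $G$-map with image $W$, and the induction on dimension together constitute the standard proof of Maschke's theorem. The paper itself supplies no argument here, only a citation to Artin (pp.\ 316, Corollary 4.9), and the proof given there is essentially the same averaging construction you use, so there is nothing to reconcile; the only point worth making explicit is that the induction tacitly uses finite-dimensionality of $\mathbb{V}$ (guaranteed in this paper, where all representations have finite degree $n$) and finiteness of $G$ so that the sum over $g \in G$ makes sense.
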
 

\begin{proof} 
	See pp 316,   corollary (4.9)  \cite{Artin}.
\end{proof}

\begin{theorem}
	Two representations ($\rho, \mathbb{V}$) and ($\rho', \mathbb{V}$) of $G$ are  isomorphic iff their character tables are same i.e, $\chi(g)=\chi'(g)$ for all g $\in G$.
	\label{theorem2.2}
\end{theorem}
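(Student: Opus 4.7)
The plan is to establish the equivalence by treating each implication separately, with the forward direction being essentially formal and the converse relying on the already stated decomposition machinery.

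For the forward direction, I would begin by assuming $(\rho,\mathbb{V})$ and $(\rho',\mathbb{V})$ are isomorphic, which means there exists an invertible linear operator $T$ on $\mathbb{V}$ intertwining them: $\rho'(g) = T\rho(g)T^{-1}$ for every $g\in G$. Taking the trace of both sides with respect to any ordered basis and invoking the cyclic invariance of the trace gives $\chi'(g) = $ tr$(T\rho(g)T^{-1}) = $ tr$(\rho(g)) = \chi(g)$ for every $g\in G$, settling this direction in one line.

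The substantive direction is the converse. By Maschke's theorem (Theorem 2.1) both representations split completely, so $\rho\cong\bigoplus_{i=1}^{r} k_i\rho_i$ and $\rho'\cong\bigoplus_{i=1}^{r} k_i'\rho_i$, where $\rho_1,\ldots,\rho_r$ are chosen representatives of the isomorphism classes of irreducible representations of $G$. Passing to characters yields $\chi = \sum_{i=1}^{r} k_i\chi_i$ and $\chi' = \sum_{i=1}^{r} k_i'\chi_i$. The next step is to invoke the first orthogonality relation $\langle\chi_i,\chi_j\rangle = \delta_{ij}$, with respect to the Hermitian inner product $\langle f,h\rangle = \frac{1}{|G|}\sum_{g\in G} f(g)\overline{h(g)}$ on class functions. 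Pairing the hypothesis $\chi=\chi'$ against each $\chi_j$ gives $k_j = \langle\chi,\chi_j\rangle = \langle\chi',\chi_j\rangle = k_j'$, so the two irreducible decompositions coincide and hence $\rho\cong\rho'$.

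The main obstacle I anticipate is that the orthogonality relation for irreducible characters is not among the preliminaries explicitly recorded in the excerpt, though it is completely standard; the cleanest fix is to quote it from Serre or Artin (both already cited), where it is derived from Schur's lemma applied to the averaged intertwiner $\frac{1}{|G|}\sum_{g\in G}\rho_i(g)\,X\,\rho_j(g)^{-1}$ for an arbitrary linear map $X$. Beyond invoking this reference, the argument requires no further ingredients and reduces to the linear-algebraic bookkeeping described above.
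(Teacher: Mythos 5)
Your proof is correct and complete. The paper itself offers no argument for this statement---it simply cites Corollary 5.13 on p.~319 of Artin---and what you have written out (trace invariance under conjugation for the forward direction; Maschke plus the first orthogonality relation to recover the multiplicities $k_i$ from $\langle\chi,\chi_i\rangle$ for the converse) is precisely the standard proof underlying that citation. Your flagged caveat is the right one: the orthogonality relation is not recorded in the paper's preliminaries and does need the standing hypotheses (here $\mathbb{F}=\mathbb{C}$, or more generally $\mathbb{F}$ algebraically closed with $\mathrm{char}(\mathbb{F})$ zero or prime to $|G|$), so quoting it from Serre or Artin is the appropriate fix.
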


\begin{proof}
	See  pp 319,  corollary ( 5.13) \cite{Artin}.
\end{proof}
 In the rest part of this section we take $\mathbb{F}=\mathbb{C}.$
\subsection{Irreducible representations of  $D_m$ with  $m \geq3$. }
\label{section3}
\noindent In this subsection  $(\rho_i, \mathbb{W}_{d_i}) $  stands for  an  irreducible representation of $D_m$  with degree $1$ or $2$  over $\mathbb{C}$.
Let $\rho_{2|Z(G)|+t}$, denotes an   irreducible representations of  degree $2$ of $G$, where $1 \leq t \leq [\frac{m-1}{2}]$.  Since  $\rho_{2|Z(G)|+t}$ is an homomorphism from $D_m$ to $GL(\mathbb{W}_2) \cong GL(2, \mathbb{F})$. So by the fundamental  theorem of homomorphism $\frac{G}{Ker(\rho_{2|Z(G)|+t})} \cong $ $\rho_{2|Z(G)|+t}(G)$. 	Now we calculate those $t$ which are co-prime with $m$. All   $2m$, regular  m-gons are in\\
\begin{center}
	$D_{m}$ = $\{ \bold{1}, a, a^2, .........., a^{m-1}, b, ab, a^2b ,... ... , a^{m-1}b\,|\, a^m= b^2 =\bold{1}, ba= a^{m-1}b \}$.
	\label{table1}
\end{center}
The representation  $\rho_{2|Z(G)|+t}$ rotates a  regular m-gon $a^s$  in the counter-clockwise  direction by an angle  $s\theta_t$, where  $\theta_t = t\frac{2\pi}{m}$ (or resp. clockwise $(m-t)\frac{2\pi}{m}$) and $b$  the reflection  about $x-axis$.
For $1 \leq s\leq m$, the polygon $a^sb$ is mapped to the  composition of   rotation  by angle $s\theta_t$ and the reflection     under an irreducible representation of degree $2$, whose trace is zero. Thus the character of the group is to be calculated on the  regular m-gon  $a^s$, $1\leq s\leq m$.  \\
 By the fundamental theorem of arithmetic, we have
 $$m= 2^{l_2} 3^{l_3}5^{l_5}7^{l_7}11^{l_{11}}.......  p^{l_p},$$
 where $p$ is the largest   prime divisor of  $m$, so $l_p \geq 1$. The number of divisors of $m$ excluding $1$ is $(l_2+1)(l_3+1)(l_5+1)(l_7+1)(l_{11}+1).....(l_p+1)-1$. We need   divisors of $m$ greater than $1$ and  less than $\frac{m}{2}$. These divisors  fall inside the range of  $t$ which are  not coprime to $m$, also for each $t$ we have an irreducible representation of degree $2$. If $t_1$
 is a divisor of $m$ then $\frac{m}{(m,t_1)}$ is  the order of $\sigma_{2|Z(G)|+t_1}(a)$, so  kernel of the respective irreducible representation is non-trivial.  There may be many such distinct divisors of $m$,  less than $\frac{m}{2}$.  Thus the  number of non-isomorphic images of $G$ under the representations $\sigma_{2|Z(G)|+t}$ is same as  the number of distinct divisors of  $m$, less than $\frac{m}{2}$ 
and whenever  $(m,t)=1$ then we have   $|Ker(\rho_{2|Z(G)|+t})|= 1 $. \\

\subsection{Counter-clockwise rotations and their compositions with reflection can be seen as below.}
$$ \rho_{2|Z(G)|+t}(a^s)= \begin{bmatrix}
Cos( \frac{2\pi}{m}ts)&-Sin( \frac{2\pi}{m}ts) \\
Sin( \frac{2\pi}{m}ts) &Cos( \frac{2\pi}{m}ts) 
\end{bmatrix}  and \hspace{0.1cm} \rho_{2|Z(G)|+t}(a^sb)=\begin{bmatrix}
Cos( \frac{2\pi}{m}ts)&Sin( \frac{2\pi}{m}ts) \\
Sin( \frac{2\pi}{m}ts) &-Cos( \frac{2\pi}{m}ts) 
\end{bmatrix}.$$
\subsection {For $m$ is  odd  and  $1 \leq t \leq \frac{m-1}{2}$, all irreducible representations of  $G$ are recorded in the following  table.}

\begin{center}
	 
	\begin{tabular}{cccccccc}
		{\bf Table 1}.&Irreducible& representations of $D_m$  when $m$ is  odd and $1\leq t\leq\frac{m-1}{2}$.\\
		\hline
		& $\rho_1$&$\rho_2$& $\rho_{2+t}$\\
		\hline
$a$& 1&1& $\begin{bmatrix}
Cos(\theta_t)&-Sin(\theta_t) \\
Sin(\theta_t) &Cos(\theta_t) 
\end{bmatrix}$\\
		\hline
	b& 1&$-1$& $\begin{bmatrix}
	1&0 \\
	0 &-1
	\end{bmatrix}$\\
	\hline
	\end{tabular}
\label{3.1}	
\end{center}

\subsection{For $m$ is  even  and   $1 \leq t \leq \frac{m}{2}-1$,  all irreducible representations of $G$ are  presented by the following  table.} 

\begin{center}
	
	\begin{tabular}{cccccccc}
		{\bf Table 2}.&Irreducible& representations of $D_m$  when $m$ is  even and $1\leq t\leq\frac{m}{2}-1$.\\
		\hline
		& $\rho_1$&$\rho_2$&$\rho_3$&$\rho_4$& $\rho_{4+t}$\\
		\hline
		$a$& 1&$1$&$-1$&$-1$& $\begin{bmatrix}
		Cos(\theta_t)&-Sin(\theta_t) \\
		Sin(\theta_t) &Cos(\theta_t) 
		\end{bmatrix}$\\
		\hline
		b& 1&$-1$&$-1$&$1$& $\begin{bmatrix}
		1&0 \\
		0 &-1
		\end{bmatrix}$\\
		\hline

	\end{tabular}
\label{3.2}	
\end{center}

\noindent Now as
\begin{equation}
\rho= k_{1} \rho_{1}\oplus k_{2} \rho_{2}\oplus  ............. \oplus k_{r} \rho_{r},
\label{directsum}
\end{equation}
where for every $1 \leq i \leq r$,  $k_{i} \rho_{i}$ stands for the direct sum of  $k_{i}$ copies of the irreducible representation $\rho_{i}$.

\noindent Let $\chi$ be the corresponding character of the representation $\rho$, then  
$$ \chi= k_{1} \chi_{1}+ k_{2} \chi_{2}+  ............. + k_{r} \chi_{r},$$
where $\chi_{i}$ is the irreducible character of $\rho_{i}$,  for every $i$, $1 \leq i \leq r$.
Dimension of the character $\chi$ is being calculated at the identity element of a group. i.e,
\[ dim(\rho) = \chi(1)=tr(\rho(1)).\]
\begin{equation}
\implies d_{1} k_{1} + d_{2}k_{2} +............. +d_{r}k_{r}= n. 
\label{splitn}
\end{equation}
\begin{Note}
	equation (\ref{splitn}) holds in more general case which  helps us in finding all  possible distinct r-tuples ($k_{1}, k_{2}, ......, k_{r}$), which correspond to the distinct n degree  representations (up to isomorphism) of a finite group.
\end{Note}
The orthonormality condition of characters  of   irreducible representations of degree $2$ is being  calculated in the following manner:
$$(\chi_{t_1+2|Z(D_m)|},\chi_{t_2+2|Z(D_m)|})=
\begin{cases}

\frac{4}{|D_m|}\sum_{a^s \in <a>} Cos^2(s\theta_{t_1}), \mbox{  when $t_1=t_2$ }, \\
\\
\frac{4}{|D_m|}\sum_{a^s \in <a>} Cos(s\theta_{t_1})Cos(s\theta_{t_2}),  \mbox{  when $t_1\neq t_2$ . }\\
\end{cases} 
$$
$$=
\begin{cases}
	
1, \mbox{  when $t_1=t_2$ }, \\
	\\
0,  \mbox{  when $t_1\neq t_2$ . }\\
\end{cases} 
$$
\section{Faithful representations of a dihedral group  of order $2m$ with  $m \geq3$. }
In this section we distinguish all   $n$ degree faithful representations of $G=D_m$ over  $\mathbb{C}$.  Here $m= 2^{l_2} 3^{l_3}5^{l_5}7^{l_7}.......  p^{l_p}.$ 
\begin{theorem}
The number of irreducible   representations of a dihedral group $G$  of order $2m$ with non-trivial kernels is $(l_2 +1) (l_3+1).....(l_p+1)+|Z(G)|-1$.
\label{th3.1}
\end{theorem}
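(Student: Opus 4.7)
The approach is to count the distinct non-trivial kernels that arise from the complete list of irreducible representations of $D_m$ described in Tables 1 and 2. Writing $\tau(m) = (l_2+1)(l_3+1)\cdots(l_p+1)$ for the number of divisors of $m$, the target count is $\tau(m) + |Z(G)| - 1$. I split the contribution by representation degree, check that the two families of kernels are disjoint, and finish with an elementary divisor count.

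For the degree $1$ irreducibles, since $D_m$ is non-abelian for $m \ge 3$, each of the $2|Z(G)|$ one-dimensional representations has a non-trivial kernel. Inspection of Tables 1 and 2 shows these kernels are pairwise distinct: $\{D_m,\langle a\rangle\}$ when $m$ is odd, and $\{D_m,\langle a\rangle,\langle a^2,b\rangle,\langle a^2,ab\rangle\}$ when $m$ is even. This contributes $2|Z(G)|$ distinct kernels.

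For the degree $2$ irreducibles, the matrix formulas show that every $\rho_{2|Z(G)|+t}(a^s b)$ has trace $0$ and determinant $-1$, hence is never the identity; so $\ker\rho_{2|Z(G)|+t}\subseteq\langle a\rangle$, and in fact equals $\langle a^{m/(m,t)}\rangle$. Two such representations share a kernel iff they share the value $d = (m,t)$. Thus the distinct kernels from degree $2$ irreducibles are indexed by divisors $d>1$ of $m$ that arise as $(m,t)$ for some $t$ with $1 \le t \le [\frac{m-1}{2}]$; choosing $t = d$ realizes every such $d$ with $d \le [\frac{m-1}{2}]$.

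The final step is to count these admissible divisors. For $m$ odd, every proper divisor of $m$ satisfies $d \le m/p \le (m-1)/2$, where $p \ge 3$ is the smallest prime dividing $m$; so all $\tau(m) - 2$ proper divisors greater than $1$ appear, giving $\tau(m)-2$ new kernels. For $m$ even, the same holds except for $d = m/2$, which exceeds $m/2 - 1 = [\frac{m-1}{2}]$ and cannot be realized by any admissible $t$ (since $(m,t)=m/2$ forces $t=m/2$, which is out of range); this gives $\tau(m) - 3$ new kernels. Since these degree $2$ kernels lie strictly inside $\langle a\rangle$, they are disjoint from the degree $1$ kernels, and summing yields $\tau(m)+|Z(G)|-1$ in both parities, as required. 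The main obstacle is the parity case analysis: one must isolate the single excluded divisor $m/2$ for even $m$, which is precisely what produces the $+1$ correction distinguishing the two parities in the formula.
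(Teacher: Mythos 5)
Your argument is internally coherent, but it proves a different statement from the one asserted: you count the number of \emph{distinct non-trivial kernels} occurring among the irreducible representations, whereas the theorem claims to count the number of \emph{irreducible representations} whose kernel is non-trivial. These two quantities coincide only when no two non-faithful irreducibles share a kernel, and that fails in general: the degree-$2$ irreducibles $\rho_{2|Z(G)|+t}$ with a fixed value of $d=(m,t)>1$ all have the same kernel $\langle a^{m/d}\rangle$ yet are pairwise non-isomorphic, since their characters differ. For $m=25$ the representations with $t=5$ and $t=10$ are two distinct non-faithful irreducibles with the common kernel $\langle a^{5}\rangle$, so the true count of irreducibles with non-trivial kernel is $2+2=4$, while the stated formula (and your kernel count) gives $(l_5+1)+|Z(G)|-1=3$. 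In general the number of non-faithful irreducibles is $2|Z(G)|+[\frac{m-1}{2}]-\frac{\phi(m)}{2}$, which agrees with the divisor-based formula only for special $m$ (primes, $m=12$, etc.), not for all $m$.

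That said, your computation is the honest version of what the paper actually does: its proof silently replaces ``the number of $t$ with $1\le t\le[\frac{m-1}{2}]$ and $(m,t)\neq 1$'' by ``the number of divisors of $m$ strictly between $1$ and $\frac{m}{2}$,'' which is exactly the representation-versus-kernel conflation that you made explicit, and which fails for $m=25$ or $m=30$. So your write-up correctly establishes that the number of distinct non-trivial kernels (equivalently, of proper quotients arising as images of irreducibles) is $(l_2+1)\cdots(l_p+1)+|Z(G)|-1$ --- your parity analysis of the excluded divisor $\frac{m}{2}$ and the disjointness of the degree-$1$ and degree-$2$ kernel families are both correct --- but it does not, and cannot, establish the theorem as literally stated, because that statement is false. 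The same defect propagates to the paper's corollary on the number of faithful irreducibles, which should be $\frac{\phi(m)}{2}$.
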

\begin{proof}
 Number of irreducible representations of  $D_m$ is $r=2|Z(G)|+ [\frac{m-1}{2}]$. For a 2 degree irreducible representation $\rho_{2|Z(G)|+t}$,   if  $(m,t)\neq1$ then kernel is non-trivial. The    number of  divisors of $m$  excluding  $1$,  less than $\frac{m}{2}$ is $(l_2 +1) (l_3+1).....(l_p+1)-|Z(G)|-1$ and  $2|Z(G)| $  representations of degree $1$ have non-trivial kernel. Thus the result follows. 
\end{proof}
\begin{theorem}
	If $\rho_{2|Z(G)|+t}$ is an irreducible representation of degree $2$ of a dihedral group $G$ of order $2m$ and $(m,t)=1$, then $\rho_{2|Z(G)|+t}$ is a faithful representation.\\
	 	\label{th3.2}
\end{theorem}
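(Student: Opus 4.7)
The plan is to show that the kernel of $\rho_{2|Z(G)|+t}$ is trivial whenever $(m,t)=1$, by inspecting the matrices listed in the preceding subsection and separating elements of $D_m$ into rotations $a^s$ and reflections $a^s b$.

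First I would observe, using the explicit formulas
$$\rho_{2|Z(G)|+t}(a^s)=\begin{bmatrix}\cos(\tfrac{2\pi ts}{m}) & -\sin(\tfrac{2\pi ts}{m})\\ \sin(\tfrac{2\pi ts}{m}) & \cos(\tfrac{2\pi ts}{m})\end{bmatrix},\qquad \rho_{2|Z(G)|+t}(a^s b)=\begin{bmatrix}\cos(\tfrac{2\pi ts}{m}) & \sin(\tfrac{2\pi ts}{m})\\ \sin(\tfrac{2\pi ts}{m}) & -\cos(\tfrac{2\pi ts}{m})\end{bmatrix},$$
that $\det\rho_{2|Z(G)|+t}(a^s)=+1$ while $\det\rho_{2|Z(G)|+t}(a^sb)=-1$. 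Since the identity matrix has determinant $+1$, no reflection-type element $a^sb$ can belong to the kernel. Hence the kernel is entirely contained in the cyclic subgroup $\langle a\rangle$.

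Next, I would restrict attention to rotations and solve $\rho_{2|Z(G)|+t}(a^s)=I_2$. This forces $\cos(\tfrac{2\pi ts}{m})=1$ and $\sin(\tfrac{2\pi ts}{m})=0$ simultaneously, i.e.\ $m\mid ts$. The coprimality hypothesis $(m,t)=1$ then implies $m\mid s$ by a standard divisibility argument, so for $0\le s\le m-1$ the only solution is $s=0$, giving $a^s=\mathbf{1}$. Therefore $\mathrm{Ker}(\rho_{2|Z(G)|+t})=\{\mathbf{1}\}$, and the representation is faithful.

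There is no serious obstacle here: the proof is essentially a one-line divisibility argument once the two matrix blocks are separated by their determinants. The only small point worth stating carefully is the reduction to $\langle a\rangle$, which avoids having to analyse the reflection matrices individually. This result will be used in Theorem \ref{thm1.4} together with Theorem \ref{th3.1} to count faithful irreducible representations.
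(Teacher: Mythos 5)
Your proof is correct and follows essentially the same route as the paper: compute the kernel of $\rho_{2|Z(G)|+t}$ and show it is trivial when $(m,t)=1$. In fact your write-up supplies the details (the determinant argument ruling out reflections, and the divisibility step $m\mid ts \Rightarrow m\mid s$) that the paper's own proof merely asserts, so no changes are needed.
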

\begin{proof}
For $1 \leq t \leq [\frac{m-1}{2}]$,  $\rho_{2|Z(G)|+t}$ is a homomorphism from   $G$  to $GL(\mathbb{W}_2)$. Let $t_1 \in \{1,2, ... ... ..., [\frac{m-1}{2}]\}$ such that $(m,t_1)=1$ then $Ker(\rho_{2|Z(G)|+t_1})=\{\bold{1} \}$. Thus the result follows. 
\end{proof}
\begin{corollary}
The number of faithful irreducible  representations of a dihedral group $G$ of order $2m$ is  $[\frac{m-1}{2}]-(l_2+1)(l_3+1)..........(l_p+1)+|Z(G)|+1$.
\label{corfaithful}
	\end{corollary}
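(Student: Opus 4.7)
The strategy is to obtain the count by complementation: subtract the number of irreducible representations with non-trivial kernel (given by Theorem \ref{th3.1}) from the total number of irreducible representations of $D_m$. Since the excerpt has already established that the total number of irreducible representations of $D_m$ is $r = 2|Z(G)| + \left[\frac{m-1}{2}\right]$, the corollary should follow by a one-line arithmetic step once Theorem \ref{th3.1} is invoked.

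More explicitly, I would first write $r = 2|Z(G)| + \left[\frac{m-1}{2}\right]$ and then invoke Theorem \ref{th3.1}, which asserts that the number of irreducible representations with non-trivial kernel equals $(l_2+1)(l_3+1)\cdots(l_p+1) + |Z(G)| - 1$. Setting
\[
\#\{\text{faithful irreducibles}\} \;=\; r \;-\; \#\{\text{non-faithful irreducibles}\},
\]
the computation becomes
\[
2|Z(G)| + \left[\tfrac{m-1}{2}\right] \;-\; \bigl((l_2+1)(l_3+1)\cdots(l_p+1) + |Z(G)| - 1\bigr),
\]
which simplifies to the claimed expression $\left[\frac{m-1}{2}\right] - (l_2+1)(l_3+1)\cdots(l_p+1) + |Z(G)| + 1$.

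Before concluding, I would make a brief sanity check that the partition into faithful and non-faithful irreducibles is exhaustive, since Theorem \ref{th3.1} and Theorem \ref{th3.2} together cover both the degree-one and degree-two cases: all $2|Z(G)|$ degree-one representations of the non-abelian group $D_m$ ($m \geq 3$) factor through the abelianization and hence lie in the non-faithful count, while the degree-two representation $\rho_{2|Z(G)|+t}$ is faithful precisely when $(m,t)=1$, by Theorem \ref{th3.2}. Thus nothing is being double-counted or missed. The main (and really the only) obstacle here is bookkeeping — making sure the enumeration of divisors of $m$ strictly between $1$ and $\frac{m}{2}$ used in Theorem \ref{th3.1} meshes with the range $1 \leq t \leq \left[\frac{m-1}{2}\right]$ without off-by-one errors; once that is accepted, the corollary is immediate.
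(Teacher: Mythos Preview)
Your proof is correct and follows essentially the same approach as the paper, which simply states that the result follows from Theorems~\ref{th3.1} and~\ref{th3.2}. Your complementation argument $r - \#\{\text{non-faithful irreducibles}\}$ is exactly what the paper intends, and your added sanity check about exhaustiveness makes the dependence on both theorems explicit.
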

\begin{proof}
Follows from the theorems \ref{th3.1} and \ref{th3.2}.
\end{proof}
\begin{theorem}
If $\sigma$ is a non-trivial  one degree representation of $D_m$, then the image of $D_m$ under $\sigma$ is isomorphic to   $\mathbb {Z}_2$.
\label{th3.3}
\end{theorem}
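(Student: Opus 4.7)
The plan is to exploit the fact that any one-degree representation lands in the abelian group $GL_1(\mathbb{C})=\mathbb{C}^*$, so the non-abelian relation $bab^{-1}=a^{-1}$ in $D_m$ forces $\sigma(a)$ to be an element of order dividing $2$.

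Concretely, I would start from the presentation $D_m=\langle a,b\mid a^m=b^2=1,\ bab^{-1}=a^{m-1}\rangle$ and apply the homomorphism $\sigma$ to each defining relation. Since $\mathbb{C}^*$ is commutative, the conjugation relation collapses to $\sigma(a)=\sigma(a)^{m-1}$, hence $\sigma(a)^{m-2}=1$. Combined with $\sigma(a)^m=1$, this yields $\sigma(a)^2=1$. The relation $b^2=1$ directly gives $\sigma(b)^2=1$. Therefore both $\sigma(a)$ and $\sigma(b)$ lie in the set $\{1,-1\}\subset\mathbb{C}^*$, and every element $a^ib^j$ of $D_m$ maps into $\{1,-1\}$ as well.

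Next I would note that $\sigma$ is non-trivial by hypothesis, so at least one of $\sigma(a),\sigma(b)$ equals $-1$; in particular the image $\sigma(D_m)$ contains both $1$ and $-1$ and hence equals $\{1,-1\}$. Since this subgroup of $\mathbb{C}^*$ has order $2$, we conclude $\sigma(D_m)\cong \mathbb{Z}_2$. As a sanity check, this exactly recovers the four possible assignments $(\sigma(a),\sigma(b))\in\{(1,1),(1,-1),(-1,1),(-1,-1)\}$ enumerated in Tables~1 and 2, and it also explains why, when $m$ is odd, the assignment $\sigma(a)=-1$ is forbidden (it would contradict $\sigma(a)^m=1$), leaving only the two $1$-degree representations $\rho_1,\rho_2$.

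There is no real obstacle here; the only subtlety is remembering to combine $\sigma(a)^{m-2}=1$ (coming from abelianizing the dihedral relation) with $\sigma(a)^m=1$ to extract $\sigma(a)^2=1$, rather than trying to read off the image from an ad hoc case split on the parity of $m$.
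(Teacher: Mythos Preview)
Your argument is correct. The only tiny quibble is stylistic: once you have $\sigma(a)^{m-2}=1$ and $\sigma(a)^m=1$, it is cleaner to say ``dividing gives $\sigma(a)^2=1$'' rather than to speak of ``combining''; and in the sanity-check paragraph the phrase ``recovers the four possible assignments'' is a bit misleading since, as you yourself immediately note, only two of the four survive when $m$ is odd. Neither point affects the validity of the proof.

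Your approach differs genuinely from the paper's. The paper simply appeals to the explicit tables of degree-one irreducibles already recorded in subsections~2.3 and~2.4: having listed the non-trivial one-dimensional characters (one for $m$ odd, three for $m$ even), one reads off directly that each takes values in $\{1,-1\}$ and is onto. Your argument instead works intrinsically from the presentation $D_m=\langle a,b\mid a^m=b^2=1,\ bab^{-1}=a^{-1}\rangle$ and the commutativity of $\mathbb{C}^*$, never invoking the classification. The table-lookup is shorter in the context of the paper, where those tables are already in place; your route is self-contained and would work verbatim over any field, and it makes transparent \emph{why} the image must have order~$2$ (namely, the dihedral relation forces $\sigma(a)^2=1$ once one lands in an abelian target).
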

\begin{proof}
Follows from the tables in the  subsections \ref{3.1} and \ref{3.2}.
\end{proof}
\begin{theorem}
If $\sigma$ is an irreducible representation for  $D_m$ of degree $2$, then the image of $D_m$ under $\sigma$ is isomorphic to  $D_{\frac{m}{(m,t)}}$.
$\label{th3.4}$
\end{theorem}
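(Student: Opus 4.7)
The plan is to invoke the fundamental homomorphism theorem together with the explicit matrix formulas for $\rho_{2|Z(G)|+t}$ recorded in subsections~\ref{3.1} and \ref{3.2}. Writing $\sigma = \rho_{2|Z(G)|+t}$ and $\theta_t = \frac{2\pi t}{m}$, we have $\sigma(D_m) \cong D_m/\ker(\sigma)$, so it suffices to determine $|\sigma(D_m)|$ and to verify the defining dihedral relations on the generators $\sigma(a)$ and $\sigma(b)$.

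First, I would compute the order of $\sigma(a)$. Since $\sigma(a)$ is the rotation matrix through the angle $\theta_t$, its $k$-th power is the rotation through $k\theta_t$, which equals the identity iff $k\theta_t \in 2\pi \mathbb{Z}$, equivalently $\frac{m}{(m,t)} \mid k$. Hence $\sigma(a)$ has order $\frac{m}{(m,t)}$, while $\sigma(b) = \operatorname{diag}(1,-1)$ obviously has order $2$.

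Next, I would check the braid-type relation $\sigma(b)\sigma(a)\sigma(b) = \sigma(a)^{-1}$. This is immediate from the fact that $\sigma$ is a homomorphism and $bab = a^{-1}$ holds in $D_m$ (a one-line $2\times 2$ check also works). Combined with the orders above, the standard presentation $\langle x, y \mid x^{m/(m,t)} = y^2 = 1,\ yxy = x^{-1}\rangle$ of $D_{m/(m,t)}$ shows that the assignment $x \mapsto \sigma(a)$, $y \mapsto \sigma(b)$ extends to a surjective homomorphism $\varphi : D_{m/(m,t)} \to \sigma(D_m)$.

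To upgrade this surjection to an isomorphism, I would show $|\sigma(D_m)| \geq 2\cdot\frac{m}{(m,t)}$. The $\frac{m}{(m,t)}$ powers $\sigma(a)^s$ are pairwise distinct rotations (with determinant $+1$), whereas each $\sigma(a^s b)$ is a reflection (determinant $-1$); the two families are therefore disjoint, giving $|\sigma(D_m)| \geq 2\cdot\frac{m}{(m,t)} = |D_{m/(m,t)}|$. The surjection $\varphi$ must then be an isomorphism. The only mildly delicate step is pinning down the order of the rotation $\sigma(a)$; everything else reduces to juxtaposing the dihedral presentation with a straightforward element count, so I do not anticipate a serious obstacle.
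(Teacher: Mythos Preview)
Your argument is correct and follows essentially the same approach as the paper: compute the order of the rotation $\sigma(a)$ to be $\frac{m}{(m,t)}$, observe that $\sigma(b)$ is a reflection, and conclude that the image is dihedral of the appropriate order. The paper's proof is a two-line sketch asserting exactly these facts, so your version is simply a more detailed execution of the same idea.
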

\begin{proof}
If  greatest common divisor of $m$ and $t$ is $(m,t)$ then $\frac{m}{(m,t)}$ is the  order of $\sigma(a)$ and $\sigma(b)$ is the reflection operator. Thus the group $\sigma(D_m)$ is isomorphic to $D_{\frac{m}{(m,t)}}$.
\end{proof}

\begin{corollary}
If $\sigma$ is a non-trivial irreducible representation of $D_m$, then the image of $D_m$ under $\sigma$ is isomorphic to either  $\mathbb {Z}_2$ or $D_{\frac{m}{(m,t)}}$. 
\end{corollary}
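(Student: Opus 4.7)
The plan is to dispatch this corollary by case analysis on the degree of $\sigma$. From the classification tabulated in Subsections~2.2 and~2.3 (Tables~1 and~2), every irreducible representation of $D_m$ over $\mathbb{C}$ has degree either $1$ or $2$: there are $2|Z(G)|$ one-dimensional ones and $[\frac{m-1}{2}]$ two-dimensional ones, exhausting the count $\sum d_i^2 = 2m$. Hence the two cases are exhaustive, and it suffices to identify the image of $\sigma$ in each.

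First, I would handle the degree-one case: if $\sigma$ is non-trivial of degree $1$, Theorem~\ref{th3.3} gives $\sigma(D_m) \cong \mathbb{Z}_2$ directly, with no further work. Next, for the degree-two case, I would observe that $\sigma$ must coincide with one of the representations $\rho_{2|Z(G)|+t}$ enumerated in the preliminary tables, for some $t$ in the admissible range ($1 \le t \le [\frac{m-1}{2}]$ when $m$ is odd, or $1 \le t \le \frac{m}{2}-1$ when $m$ is even). Theorem~\ref{th3.4} then applies to this $\sigma$ and yields $\sigma(D_m) \cong D_{\frac{m}{(m,t)}}$. Combining the two cases produces the asserted disjunction.

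I do not foresee any substantive obstacle: the corollary is essentially a consolidation of Theorems~\ref{th3.3} and~\ref{th3.4} together with the preliminary classification of irreducibles. The only point requiring a moment of care is verifying, in the degree-two case, that $\sigma$ is indeed indexed by some admissible $t$ so that Theorem~\ref{th3.4} may be invoked; this is immediate from Tables~1--2, where every two-dimensional irreducible is presented in the form $\rho_{2|Z(G)|+t}$.
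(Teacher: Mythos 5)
Your proposal is correct and follows exactly the paper's route: the paper's own proof is simply the one-line observation that the corollary follows from Theorems \ref{th3.3} and \ref{th3.4}, which is precisely your case split on degree $1$ versus degree $2$. Your added remark that the degrees $1$ and $2$ exhaust all irreducibles of $D_m$ is a sensible (implicit in the paper) justification that the two cases cover everything.
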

\begin{proof}
 Follows from the   theorems \ref{th3.3} and \ref{th3.4}.
\end{proof}
\begin{corollary}
	 The number of non-isomorphic images under irreducible representations of $D_m$ is $2+(l_2 +1) (l_3+1).....(l_p+1)-|Z(D_m)|$.
\end{corollary}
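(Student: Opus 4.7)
The plan is to tabulate the image groups that arise from the irreducible representations of $D_m$ and then count the distinct isomorphism classes among them. By Theorem \ref{th3.3} every non-trivial $1$-degree irreducible representation has image isomorphic to $\mathbb{Z}_2$, and by Theorem \ref{th3.4} every $2$-degree irreducible representation $\rho_{2|Z(G)|+t}$ has image isomorphic to $D_{m/(m,t)}$; the only remaining possibility is the trivial representation with image $\mathbb{Z}_1$. So the count decomposes as $2$ (from $\mathbb{Z}_1$ and $\mathbb{Z}_2$) plus the number of distinct groups of the form $D_{m/(m,t)}$, provided I verify that $\mathbb{Z}_1$ and $\mathbb{Z}_2$ are not isomorphic to any of these dihedral images; but this is immediate, since each $D_{m/(m,t)}$ has order at least $6$ in the admissible range of $t$ (as $m/(m,t)\geq 3$ in every case below).

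The second step is to note that $D_k\cong D_{k'}$ precisely when $k=k'$, so two of the $2$-degree images are isomorphic exactly when $(m,t_1)=(m,t_2)$. Thus counting distinct images reduces to counting the distinct values of $(m,t)$, which form a set of divisors of $m$; the task becomes: for which proper divisors $d$ of $m$ does there exist an admissible $t$ with $(m,t)=d$?

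Next I would settle this divisor-by-divisor by taking $t=d$. When $m$ is odd, the range is $1\leq t\leq \frac{m-1}{2}$, and every proper divisor $d$ of $m$ satisfies $d\leq m/p_{\min}\leq m/3\leq \frac{m-1}{2}$ (using $m\geq 3$ and that the smallest prime factor of an odd $m$ is at least $3$); hence all $d(m)-1$ proper divisors are realized. When $m$ is even, the range is $1\leq t\leq \frac{m}{2}-1$; the divisor $d=\frac{m}{2}$ cannot arise because $(m,t)=\frac{m}{2}$ would force $\frac{m}{2}\mid t$ and so $t\geq \frac{m}{2}$, while every other proper divisor $d<\frac{m}{2}$ is attained via $t=d$. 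This yields $d(m)-2$ distinct values in the even case.

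Finally, I would amalgamate both cases through $|Z(D_m)|=1$ for odd $m$ and $|Z(D_m)|=2$ for even $m$, which uniformly turns both counts into $(l_2+1)(l_3+1)\cdots(l_p+1)-|Z(D_m)|$; adding the contribution $2$ from $\mathbb{Z}_1$ and $\mathbb{Z}_2$ gives the claimed total. The only real obstacle is the careful handling of the even case, where the divisor $\frac{m}{2}$ is conspicuously missing from the admissible gcd values, and confirming that this one missing divisor is precisely what the correction $|Z(D_m)|=2$ encodes in the stated formula.
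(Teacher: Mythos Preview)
Your proposal is correct and follows essentially the same approach as the paper's proof: both identify the possible images as $\mathbb{Z}_1$, $\mathbb{Z}_2$, and the dihedral groups $D_{m/(m,t)}$, and both reduce the count of distinct dihedral images to the number of divisors of $m$ less than $m/2$, which equals $(l_2+1)\cdots(l_p+1)-|Z(D_m)|$. Your version is more careful than the paper's terse argument—you explicitly verify that every such divisor is realized as some $(m,t)$ by taking $t=d$, you check that the dihedral images (having order $\geq 6$) cannot coincide with $\mathbb{Z}_1$ or $\mathbb{Z}_2$, and you handle the odd/even parity split that accounts for the missing divisor $m/2$ when $m$ is even—but the underlying idea is identical.
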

\begin{proof}
An irreducible  representation sends  $D_m$ to either one of  $\mathbb{Z}_1$,  $\mathbb{Z}_2$,  $D_{\frac{m}{(m,t)}}$  and as the number of distinct divisors of $m$, less than $\frac{m}{2}$ is $(l_2 +1) (l_3+1).....(l_p+1)-|Z(D_m)|$, the result follows.
\end{proof}

\begin{corollary}
	If $\rho$ is a finite degree representation of $D_m$, then  $\rho(D_m)$  is isomorphic to either  $\mathbb {Z}_1$, $\mathbb {Z}_2$ or $D_{lcm\{ \frac{m}{(m,t)}|{t^{th }_{\rho}irrep}\}}$, where $t^{th}_{\rho}irrep$  stands for the  $t^{th}$ irreducible representation of degree 2 appearing in $\rho$.
	\label{corirred} 
\end{corollary}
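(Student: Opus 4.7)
The plan is to combine Maschke's theorem with the preceding theorems \ref{th3.3} and \ref{th3.4}, working at the level of the generators $a$ and $b$ of $D_m$. By Maschke's theorem every finite-degree representation decomposes as $\rho=\bigoplus_{i=1}^{r}k_{i}\rho_{i}$ as in (\ref{directsum}), so $\rho(D_m)=\langle\rho(a),\rho(b)\rangle$ is determined by the orders of $\rho(a)$ and $\rho(b)$ together with the relation $\rho(b)\rho(a)\rho(b)^{-1}=\rho(a)^{-1}$, which is automatic since $\rho$ is a homomorphism and $bab^{-1}=a^{-1}$ in $D_m$. Hence $\rho(D_m)$ is always a quotient of a dihedral group, and the problem reduces to pinning down the exact order of $\rho(a)$ and checking that $\rho(b)\notin\langle\rho(a)\rangle$.

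I would then split into three cases according to which irreducibles occur in the decomposition. If only the trivial representation appears, then $\rho(a)=\rho(b)=I$ and $\rho(D_m)\cong\mathbb{Z}_1$. If $\rho$ is a direct sum of one-dimensional representations with at least one non-trivial summand, then componentwise $\rho_{i}(a),\rho_{i}(b)\in\{\pm 1\}$, so applying Theorem \ref{th3.3} block by block gives $\rho(D_m)\cong\mathbb{Z}_2$. If at least one degree-two irreducible $\rho_{2|Z(G)|+t}$ occurs in $\rho$, then Theorem \ref{th3.4} identifies the order of $\rho_{2|Z(G)|+t}(a)$ as $m/(m,t)$; in the direct sum, the order of $\rho(a)$ is therefore $L:=\operatorname{lcm}\{\,m/(m,t)\mid \rho_{2|Z(G)|+t}\text{ appears in }\rho\,\}$.

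To conclude $\rho(D_m)\cong D_{L}$ in the third case, I would verify that $\rho(b)^{2}=I$ (obvious from $b^{2}=1$), that $\rho(b)\notin\langle\rho(a)\rangle$ (because within any two-dimensional block $\rho(b)$ acts as a reflection while every power $\rho(a)^{k}$ acts as a rotation, and these are disjoint in $O(2,\mathbb{C})$), and that the dihedral relation $\rho(b)\rho(a)\rho(b)^{-1}=\rho(a)^{-1}$ already observed transfers through the direct sum. These facts together show $\langle\rho(a),\rho(b)\rangle$ has exactly $2L$ elements, is generated by an element of order $L$ and an involution satisfying the dihedral relation, and hence is isomorphic to $D_{L}$.

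The main obstacle is checking that the order of $\rho(a)$ really equals $L$ when one-dimensional and two-dimensional irreducibles are mixed in $\rho$: a priori the overall order of $\rho(a)$ is the LCM of the orders of $\rho_{i}(a)$ across \emph{all} appearing blocks, and the one-dimensional factors $\rho_{i}(a)\in\{\pm 1\}$ could in principle contribute an additional factor of $2$ not present in $L$. Resolving this requires the observation that either $L$ is already even (in which case the $\pm 1$ contributions are absorbed) or the one-dimensional contribution is forced by the structure of the dihedral character table to be compatible with the two-dimensional components, so that the LCM over the two-dimensional blocks governs the order. The remaining verifications --- $\rho(b)^{2}=I$ and the dihedral relation --- are immediate from $\rho$ being a homomorphism, and the claim that $\rho(b)$ lies outside $\langle\rho(a)\rangle$ follows from the rotation-versus-reflection dichotomy in any two-dimensional block.
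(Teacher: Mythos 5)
Your overall strategy matches the paper's: decompose $\rho$ into irreducibles, dispose of the purely one-dimensional case via Theorems \ref{th3.3}, and in the presence of degree-two constituents read off the order of $\rho(a)$ from Theorem \ref{th3.4} as the lcm of the numbers $m/(m,t)$, then invoke the dihedral relations. The paper's proof is essentially your third case, except that it silently identifies the order of $\rho(a)$ with $L=\operatorname{lcm}\{m/(m,t)\}$ and never mentions the one-dimensional blocks. You correctly isolate the real difficulty --- a one-dimensional constituent with $\rho_i(a)=-1$ can raise the order of $\rho(a)$ above $L$ --- but your proposed resolution does not hold. There is no ``forcing by the character table'': the multiplicities $k_i$ in a direct sum are completely free, so nothing prevents pairing a sign representation having $\rho_i(a)=-1$ with a two-dimensional constituent for which $m/(m,t)$ is odd. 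Concretely, take $m=6$ and $\rho=\rho_3\oplus\rho_{4+2}$ (so $\rho_3(a)=\rho_3(b)=-1$, and $t=2$ gives $m/(m,t)=3$): then $L=3$, but $\rho(a)$ has order $\operatorname{lcm}(2,3)=6$, the kernels $\{1,a^2,a^4,ab,a^3b,a^5b\}$ and $\{1,a^3\}$ intersect trivially, so $\rho$ is faithful and $\rho(D_6)\cong D_6$, not $D_3$ as the stated formula predicts.

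So the gap you flagged is genuine and is not repaired by the argument you sketch (nor by the paper's, which does not address it at all); the correct index is the lcm of the orders of $\rho_i(a)$ over \emph{all} constituents, i.e.\ $\operatorname{lcm}(L,2)$ whenever a one-dimensional constituent with $\rho_i(a)=-1$ occurs, which is possible only for $m$ even. The remainder of your argument is sound and in fact more careful than the paper's one-line conclusion: you verify $\rho(b)^2=I$, the conjugation relation $\rho(b)\rho(a)\rho(b)^{-1}=\rho(a)^{-1}$, and $\rho(b)\notin\langle\rho(a)\rangle$ (since $\rho(b)$ acts as a reflection while every $\rho(a)^k$ acts as a rotation in any two-dimensional block), which is exactly what is needed to conclude that the image is dihedral of order twice the order of $\rho(a)$.
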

\begin{proof}
	If $\rho$ consists of  only degree $1$ representations  then $\rho(D_m) \cong \mathbb {Z}_1$ or  $\mathbb {Z}_2$. So suppose $\rho$ consists of  $t_{r_1}^{th},t_{r_2}^{th},.......,t_{r_l}^{th}$ irreducible representations of degree 2 then from Theorem  $\ref{th3.4}$, for every $t_r$, $\rho_{t_r}(D_m)$ is isomorphic to $D_{\frac{m}{(m,t_r)}}$ i.e.,  ${\frac{m}{(m,t_r)}}$ is the order of  $\rho_{t_r}(a)$ . 
	Therfore $lcm{\{\frac{m}{(m,t_{r_1})}, \frac{m}{(m,t_{r_2})}, ......, \frac{m}{(m,t_{r_l})}\}}$ is the order of $\rho(a)$ together with $\rho(b)^2$ is an identity operator. Hence the result follows. 
	\end{proof}
\section{Existence of non-degenerate invariant  bilinear  forms.}
\indent An element in  the space of invariant  bilinear forms under representation of  a finite  group is either    non-degenerate or   degenerate. An  element of the space is degenerate when at least one irreducible representation  consists of degenerate bilinear forms. If all elements of the space is degenerate then  the space is called a degenerate invariant space, which is  also  discussed in \cite{DCT}  for the groups of order $8$ and \cite{DCJT} for the groups of order $p^3$, with an odd prime $p$. How many such representations exists out of  total representations, is a matter of investigation. Some of the spaces contains both non-degenerate and degenerate invariant  bilinear forms under   a certain representation. In this section we compute the number of such representations of $D_m$ over $\mathbb{C}$.\\
\begin{Remark}
	The space $\Xi_{G}'$  of invariant bilinear forms under an n degree representation $\rho$ contains only those $X \in \mathbb{M}_{n}(\mathbb{C})$ whose  $(i,j)^{th}$ block is a O sub-matrix of order ${d_{i}k_{i}\times d_{j}k_{j}}$ when $i \neq j$  whereas  the $(i,i)^{th}$ block of X, for $1 \leq i \leq 2|Z(D_m)|$, is given by
	$$X^{ii}_{d_ik_i}= 
	\begin{bmatrix}
	x^{ii}_{11}&x^{ii}_{12}&...&...&...&x^{ii}_{1k_{i}}\\
	x^{ii}_{21}&x^{ii}_{22}&...&...&...&x^{ii}_{2k_{i}}\\
	...&...&...&...&...&...\\
	...&...&...&...&...&...\\
		...&...&...&...&...&...\\
	x^{ii}_{k_{i}1}&x^{ii}_{k_{i}2}&...&...&...&x^{ii}_{k_{i}k_{i}}\\
	\label{remark4.1}
	\end{bmatrix}.$$

	\label{remark}
and 	  for  $i \geq 2|Z(D_m)|+1$, 

$$X^{ii}_{d_ik_i}= 
\begin{bmatrix}
x^{ii}_{11}I_{2}&x^{ii}_{13}I_{2}&...&...&...&x^{ii}_{1(k_{i}-1)}I_{2}\\
x^{ii}_{31}I_{2}&x^{ii}_{33}I_{2}&...&...&...&x^{ii}_{3(k_{i}-1)}I_{2}\\
...&...&...&...&...&...\\
...&...&...&...&...&...\\
...&...&...&...&...&...\\
x^{ii}_{(2k_{i}-1)1}I_{2}&x^{ii}_{(2k_{i}-1)2}I_{2}&...&...&...&x^{ii}_{(2k_{i}-1)(2k_{i}-1)}I_{2}\\
\end{bmatrix}.
$$
\end{Remark} 
\begin{Note}
	 $X\in \Xi_G'$ is an invariant bilinear form under $\rho$ if and only if for every $i,  1 \leq i \leq r$,  $X_{d_ik_i}^{ii}$ is an invariant bilinear form under $k_i\rho_i$. 
\end{Note}
\subsection{Characterization of invariant bilinear forms under an n degree representation of a dihedral group of order $2m$, with $m \geq3$.}

\begin{lemma}
If $X \in \Xi_{G}'$, and  for $1 \leq i \leq r$, $X^{ii}_{d_{i}k_{i}}$ is a non-singular sub-matrix, then $X$ must be non-singular.
	\label{singular}
\end{lemma}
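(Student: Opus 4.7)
The plan is to exploit the block-diagonal structure of $X$ established in the preceding Remark. By that Remark, any $X \in \Xi_{G}'$ satisfies the property that its $(i,j)^{th}$ block is the zero sub-matrix of size $d_i k_i \times d_j k_j$ whenever $i \neq j$. Consequently, $X$ can be written in the form
\[
X = \mathrm{Diag}\bigl[X^{11}_{d_1 k_1},\, X^{22}_{d_2 k_2},\, \ldots,\, X^{rr}_{d_r k_r}\bigr],
\]
i.e., as a block-diagonal matrix of order $n = \sum_{i=1}^{r} d_i k_i$.

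First I would remark that the determinant of a block-diagonal matrix equals the product of the determinants of its diagonal blocks; this is a standard fact from linear algebra (obtained, for instance, by cofactor expansion along the appropriate rows, or by the Leibniz formula noting that only permutations preserving each block contribute). Applying this to $X$ yields
\[
\det(X) \;=\; \prod_{i=1}^{r} \det\bigl(X^{ii}_{d_i k_i}\bigr).
\]
By hypothesis each diagonal block $X^{ii}_{d_i k_i}$ is non-singular, so $\det(X^{ii}_{d_i k_i}) \neq 0$ for every $i$, $1 \leq i \leq r$. Since the field $\mathbb{C}$ is an integral domain, a product of non-zero elements is non-zero, which gives $\det(X) \neq 0$ and hence $X$ is non-singular.

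There is no substantial obstacle here: the entire content of the lemma is carried by the block-diagonal normal form supplied by the Remark, and the conclusion reduces to the multiplicativity of the determinant over block-diagonal matrices. The one point worth flagging explicitly in the write-up is that the Remark genuinely forces the off-diagonal blocks to vanish for every $X \in \Xi_G'$ (not merely the $X$ of interest), so the structural hypothesis is automatically available and need not be imposed separately.
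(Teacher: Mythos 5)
Your proof is correct and rests on the same structural fact as the paper's: the preceding Remark forces every $X \in \Xi_G'$ into block-diagonal form, and the conclusion is then a standard property of block-diagonal matrices. The only difference is cosmetic --- the paper finishes by exhibiting the explicit inverse $Y = \mathrm{Diag}\bigl[(X^{11}_{d_1k_1})^{-1}, \ldots, (X^{rr}_{d_rk_r})^{-1}\bigr]$ and checking $XY = I_n = YX$, whereas you invoke the multiplicativity of the determinant over the diagonal blocks; both steps are standard and equally valid.
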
  

\begin{proof}  With reference to the above remark, for every $X \in \Xi_{G}'$, we have $X= Diag\big[X^{11}_{d_{1}k_{1}}, X^{22}_{d_{2}k_{2}},......, X^{ii}_{d_{i}k_{i}},....., X^{rr}_{d_{r}k_{r}} \big]$ with $X^{ii}_{d_{i}k_{i}}=  C_{k_{i}\rho_{i}(g)}^{t}X^{ii}_{d_{i}k_{i}}C_{k_{i}\rho_{i}(g)}$ and  for $1 \leq i \leq r$, $X^{ii}_{d_{i}k_{i}}$ is a non-singular sub-matrix, so  there exists  $Y= Diag\big[(X^{11}_{d_{1}k_{1}})^{-1}, (X^{22}_{d_{2}k_{2}})^{-1},......, (X^{ii}_{d_{i}k_{i}})^{-1},....., (X^{rr}_{d_{r}k_{r}})^{-1} \big]$ in $ \mathbb{M}_{n}(\mathbb{C})$ such that $XY= I_n=YX$. Thus the result follows. 
 \end{proof}

		 To prove  the next lemma we will choose  only those $X \in \mathbb{M}_{n}(\mathbb{C})$ whose $(i,j)^{th} $ block matrix is zero for $i \neq j$ and   for  the $(i,i)^{th}$ block-diagonal matrix   $X^{ii}_{d_{i}k_{i}}$ is    non-singular. 

 \begin{lemma} For $n \in \mathbb{Z}^{+}$, every  $n$-degree representation of a dihedral  group $G$, has a non-degenerate invariant  bilinear form.
 	\label{nondeg}
 \end{lemma}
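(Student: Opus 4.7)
The plan is to exhibit the identity matrix $I_n$ as an explicit non-degenerate element of $\Xi_G'$. By Maschke's theorem one decomposes $\rho = \oplus_{i=1}^r k_i \rho_i$, and by Theorem~\ref{theorem1.2} together with Remark~\ref{remark}, every $X \in \Xi_G'$ is block-diagonal with blocks $X^{ii}_{d_ik_i}$ satisfying
\[
C^{t}_{k_i\rho_i(g)} X^{ii}_{d_ik_i} C_{k_i\rho_i(g)} = X^{ii}_{d_ik_i}\quad \forall g \in G.
\]
By Lemma~\ref{singular} it is enough to produce, for each $i$, a non-singular block $X^{ii}_{d_ik_i}$ of the shape prescribed in Remark~\ref{remark}.

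The key observation is that the explicit matrix choices recorded in the two tables of Section~\ref{section3} are \emph{orthogonal}. For the degree-one irreducibles, $\rho_i(g) = \pm 1$, which trivially satisfies $C^t C = 1$; for the degree-two irreducibles $\rho_i(a^s)$ is a planar rotation and $\rho_i(a^s b)$ a planar reflection, and both satisfy $C^t C = I_2$. Consequently $C_{k_i\rho_i(g)}$, being block-diagonal with $k_i$ orthogonal copies of $\rho_i(g)$, is itself orthogonal for every $g \in D_m$.

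Taking $X^{ii}_{d_ik_i} = I_{d_ik_i}$ therefore gives $C^t I C = C^t C = I$, so the identity lies in the required invariant subspace; and $I_{d_ik_i}$ fits the structural template of Remark~\ref{remark}, being $I_{k_i}$ in the degree-one case (set $x^{ii}_{jj}=1$ and $x^{ii}_{jk}=0$ for $j\neq k$) and the corresponding scalar-with-$I_2$-blocks matrix in the degree-two case. Assembling $X = Diag[I_{d_1k_1}, \ldots, I_{d_rk_r}] = I_n$ and invoking Lemma~\ref{singular} produces a non-singular element of $\Xi_G'$, i.e., a non-degenerate invariant bilinear form, which proves the lemma.

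There is no real obstacle here: the only verification requiring attention is that $I_{d_ik_i}$ matches the block template of Remark~\ref{remark} for the degree-two summands, but once that template is read as ``a $k_i\times k_i$ scalar matrix tensored with $I_2$,'' the identity is the obvious such choice. Everything else reduces to the orthogonality of rotations and reflections together with the block-diagonal reduction already supplied by Lemma~\ref{singular}.
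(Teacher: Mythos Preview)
Your proof is correct and follows essentially the same route as the paper: exhibit a block-diagonal matrix whose diagonal blocks are non-singular and of the shape prescribed in Remark~\ref{remark}. Where the paper simply asserts that non-singular blocks of that form can be chosen, you make the concrete choice $X=I_n$ and justify invariance directly via the orthogonality of the rotation and reflection matrices in Tables~1 and~2 --- a clean observation that in fact makes the detour through the block template and Lemma~\ref{singular} unnecessary, since $C_{\rho(g)}^{t} I_n\, C_{\rho(g)} = I_n$ already shows $I_n\in\Xi_G'$, and $I_n$ is obviously invertible. One minor point: your appeal to Theorem~\ref{theorem1.2} is dispensable and mildly forward-referencing (its proof appears only in Section~6); Remark~\ref{remark} alone, or the orthogonality argument by itself, suffices.
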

\begin{proof} From equation (\ref{splitn}) we have $d_{1} k_{1} + d_{2}k_{2} +............. +d_{r}k_{r}= n$ and  $X \in \mathbb{M}_{n}(\mathbb{C})$ such that $X= Diag\big[X^{11}_{d_{1}k_{1}}, X^{22}_{d_{2}k_{2}},$ $......, X^{ii}_{d_{i}k_{i}},....., X^{rr}_{d_{r}k_{r}} \big]$. If for every  $i , 1 \leq i\leq r$, the block diagonal sub-matrix  $X^{ii}_{d_{i}k_{i}}$  of $X$ is  chosen  (from the  above remark   $\ref{remark}$ ) to be  non-singular,  then  $X^{ii}_{d_{i}k_{i}}=  C_{k_{i}\rho_{i}(g)}^{t}X^{ii}_{d_{i}k_{i}}C_{k_{i}\rho_{i}(g)}$,  $\forall g \in G$.  Therefore  $X \in \Xi_{G}' $  and is non-singular.
\end{proof}
 \begin{lemma}  Every irreducible representation  of    a dihedral group consists of a non-degenerate invariant bilinear form. 
	\label{lemma43}
\end{lemma}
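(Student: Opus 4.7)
The plan is to verify the statement directly for each irreducible representation of $D_m$ by exhibiting an explicit non-degenerate invariant form, using the matrix realizations given in Tables 1 and 2 of subsections \ref{3.1} and \ref{3.2}. The whole argument reduces to the observation that, in these realizations, $C_{\rho_i(g)}$ is always an orthogonal matrix, so the identity matrix $I_{d_i}$ is automatically an invariant form.

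First I would dispose of the one-dimensional irreducible representations. For any such $\rho_i$, Tables 1 and 2 show that $\rho_i(g) \in \{\pm 1\}$ for every $g \in G$. Taking $X = (1) \in \mathbb{M}_1(\mathbb{C})$, the invariance condition reads $\rho_i(g)^t \cdot 1 \cdot \rho_i(g) = \rho_i(g)^2 = 1$, which holds trivially; and $X$ is obviously non-singular. Hence each degree-$1$ irreducible representation carries a non-degenerate invariant bilinear form.

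Next I would treat the two-dimensional irreducibles $\rho_{2|Z(G)|+t}$. Here I claim that $X = I_2$ is a non-degenerate invariant bilinear form. The matrices $C_{\rho_{2|Z(G)|+t}(a^s)}$ are the standard rotation matrices and so satisfy $C^{t}C = I_2$. A direct $2\times 2$ computation on the symmetric matrices $C_{\rho_{2|Z(G)|+t}(a^s b)}$ gives
$$\begin{bmatrix} \cos(s\theta_t) & \sin(s\theta_t) \\ \sin(s\theta_t) & -\cos(s\theta_t) \end{bmatrix}^{2} \;=\; \bigl(\cos^{2}(s\theta_t)+\sin^{2}(s\theta_t)\bigr)\, I_2 \;=\; I_2,$$
so these reflection matrices are also orthogonal. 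Since every element of $D_m$ is of one of the two forms $a^s$ or $a^s b$, we conclude $C_{\rho_{2|Z(G)|+t}(g)}^{t}\, I_2\, C_{\rho_{2|Z(G)|+t}(g)} = I_2$ for all $g \in G$, and $I_2$ is visibly non-singular. This exhibits the required non-degenerate invariant form in degree $2$.

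Since Tables 1 and 2 exhaust all irreducible representations of $D_m$ over $\mathbb{C}$, the two cases above prove the lemma. There is no real obstacle: the work is hidden in the explicit matrix realizations from Section 2, which were already chosen so that $C_{\rho_i(g)}$ is either a rotation or a reflection. Orthogonality of these matrices forces the standard inner product to be invariant, and the argument terminates.
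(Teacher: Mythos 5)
Your proof is correct, and it takes a more explicit route than the paper's. The paper derives this lemma as a corollary of its Lemma \ref{nondeg}, whose proof in turn leans on Remark \ref{remark}: one first accepts the structural description of $\Xi_G'$ (block-diagonal matrices whose $(i,i)$ blocks range over a $k_i^2$-dimensional space), and then observes that a non-singular block can be chosen in each slot. That argument is top-down and, as written, somewhat terse --- it asserts that a non-singular choice of $X^{ii}_{d_ik_i}$ satisfies the invariance equation without exhibiting one. You instead go bottom-up: you name the candidate form ($X=(1)$ in degree $1$, $X=I_{d_i}=I_2$ in degree $2$) and verify the invariance condition $C_{\rho_i(g)}^{t}XC_{\rho_i(g)}=X$ directly from the orthogonality of the rotation and reflection matrices in Tables 1 and 2. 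This buys a self-contained, checkable proof of the lemma that does not depend on the unproved Remark, at the cost of being tied to the particular real-orthogonal matrix realization the paper happens to use (an isomorphic but non-orthogonal realization would require first conjugating back to this one, or invoking the general averaging/Weyl unitarian trick). One small economy you could add: since $a$ and $b$ generate $D_m$ and invariance is preserved under products, it suffices to check the condition on $\rho_i(a)$ and $\rho_i(b)$ alone, though your verification on all of $a^s$ and $a^sb$ is of course also complete.
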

\begin{proof} Follows from the proof of the lemma $\ref{nondeg}$ .
\end{proof}


\begin{Remark} Since $\mathbb{C}$ contains  infinitely many non zero elements, hence  if there is one non-degenerate  invariant bilinear form in the space $\Xi_{G}$, it has   infinitely many.
\end{Remark}
 Thus from Lemma  $\ref{nondeg}$, we find that every $n$ degree representation of a dihedral  group $G$ of order $2m$, with $m \geq 3$,    consists of a non-degenerate invariant  bilinear form.
 

\begin{lemma}
	Let $G$ be a dihedral group of order $2m$ and $\rho= \oplus_{i=1}^{r} k_{i}\rho_{i}$  an n degree representation of $G$, then $\rho$ has a degenerate invariant  bilinear form iff at least one block-diagonal matrix   is singular. 
	\label{lemma3.9}
\end{lemma}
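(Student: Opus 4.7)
The plan is to reduce the statement to a determinantal computation via the block-diagonal structure recorded in Remark~\ref{remark}. Every invariant bilinear form $X \in \Xi_G'$ takes the shape $X = Diag[X^{11}_{d_1k_1}, X^{22}_{d_2k_2}, \ldots, X^{rr}_{d_rk_r}]$, with all off-diagonal blocks forced to vanish by the isotypic decomposition of $\rho$. Multiplicativity of the determinant for block-diagonal matrices then gives the key identity
\begin{equation*}
\det(X) \;=\; \prod_{i=1}^{r} \det(X^{ii}_{d_ik_i}),
\end{equation*}
so that $X$ is singular (i.e., degenerate as a bilinear form) precisely when at least one factor on the right vanishes.

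For the necessity direction, suppose $\rho$ admits a degenerate invariant bilinear form $X$. Then $\det(X)=0$, and the displayed product identity forces at least one of the blocks $X^{ii}_{d_ik_i}$ to be singular. Conversely, if some $X \in \Xi_G'$ has at least one singular diagonal block, the same factorization yields $\det(X) = 0$, so $X$ is itself a degenerate invariant bilinear form of $\rho$. It remains to note that such an $X$ is genuinely available inside $\Xi_G'$: the invariance condition $X^{ii}_{d_ik_i} = C_{k_i\rho_i(g)}^{t} X^{ii}_{d_ik_i} C_{k_i\rho_i(g)}$ is imposed block by block and admits singular solutions in each block (e.g.\ the zero matrix, or any block matching the explicit shapes in Remark~\ref{remark} with a vanishing determinant), so the converse is not vacuous. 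This also dovetails with Lemma~\ref{nondeg}, which addressed the opposite extreme where all blocks are chosen non-singular.

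The entire argument is bookkeeping layered on top of the structural remark; the only subtlety is to check that singular block choices are compatible with the invariance constraint, which is automatic since the constraint is linear and homogeneous in the entries of each $X^{ii}_{d_ik_i}$. I therefore anticipate no serious obstacle, and the proof should fit in a few lines.
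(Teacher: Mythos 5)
Your proof is correct and is essentially the argument the paper has in mind: the paper's own proof of this lemma is just ``Easy to see,'' and the intended justification is exactly your factorization $\det(X)=\prod_{i=1}^{r}\det\bigl(X^{ii}_{d_ik_i}\bigr)$ coming from the block-diagonal shape in Remark~\ref{remark}. Your added check that singular blocks (e.g.\ the zero block) are actually realized within $\Xi_G'$ makes the converse direction explicit, which the paper leaves unstated.
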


\begin{proof} 
	Easy to see. 	
\end{proof}

\begin{definition}
	The space $\Xi_{G}$ of invariant bilinear forms  is   called  degenerate  if it's all elements are degenerate.
\end{definition}
We  will  discuss about the degenerate  invariant  space in the later section.\\

\section{Dimensions of  spaces of  invariant bilinear forms under representations of a  dihedral group of order $2m$.}
\noindent  The  space of invariant   bilinear forms under an $n$ degree representation is generated by finitely many vectors so its  dimension is finite along with its  symmetric subspace and the skew-symmetric subspace. In this section  we calculate  the dimension of the space of invariant bilinear forms under a representation over $\mathbb{C}$ of a dihedral  group $D_m$ of order $2m$, with $m \geq 3$.\\
\begin{theorem}
	If $\Xi_{G}$ is the  space of invariant bilinear forms  under an n degree representation  $\rho =\oplus_{i=1}^{r}k_{i}\rho_{i}$ of $D_m$, then  dim$(\Xi_{G})= \sum_{i=1}^{2|Z(D_m)|+[\frac{m-1}{2}]}k_{i}^{2}$. 	 
	\label{th5.1}
\end{theorem}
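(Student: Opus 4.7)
The plan is to derive the formula as an almost immediate consequence of Theorem \ref{theorem1.2}, combined with the fact (stated in the Preliminaries) that $D_m$ has exactly $r = 2|Z(D_m)| + [\tfrac{m-1}{2}]$ inequivalent irreducible representations. From Theorem \ref{theorem1.2} one already has the block decomposition $\Xi_G' = \bigoplus_{i=1}^{r} \mathbb{W}_{(G, k_i\rho_i)}$, so
\[
\dim \Xi_G \;=\; \dim \Xi_G' \;=\; \sum_{i=1}^{r} \dim \mathbb{W}_{(G, k_i\rho_i)},
\]
and the task reduces to establishing $\dim \mathbb{W}_{(G, k_i\rho_i)} = k_i^2$ for every $i$.

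To compute each block dimension I would split on the degree $d_i$ and use the explicit parameterization of Remark \ref{remark}. For $1 \le i \le 2|Z(D_m)|$ the irreducible $\rho_i$ has degree $1$ with values in $\{\pm 1\}$ (from the character tables in subsections \ref{3.1} and \ref{3.2}), so $C_{k_i\rho_i(g)} = \pm I_{k_i}$ and the invariance equation $X^{ii}_{k_i} = C^t_{k_i\rho_i(g)}\, X^{ii}_{k_i}\, C_{k_i\rho_i(g)}$ is trivially satisfied; all $k_i^2$ scalar entries of $X^{ii}_{k_i}$ are free, so $\dim \mathbb{W}_{(G,k_i\rho_i)} = k_i^2$. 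For $i > 2|Z(D_m)|$, the matrices $\rho_i(a^s)$ and $\rho_i(a^s b)$ written out in Section 2 are orthogonal, so $\rho_i(g)^t = \rho_i(g)^{-1}$; the single-copy invariance $\rho_i(g)^t A \rho_i(g) = A$ is then equivalent to $A$ commuting with every $\rho_i(g)$, and Schur's lemma forces $A = c I_2$. Promoting this to the $k_i$-copy situation with $C_{k_i\rho_i(g)} = I_{k_i} \otimes \rho_i(g)$ forces each $2\times 2$ sub-block of $X^{ii}_{2k_i}$ to be a scalar multiple of $I_2$, matching the $x^{ii}_{jk} I_2$ form of Remark \ref{remark}, with all $k_i^2$ scalars independent.

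Summing both cases gives
\[
\dim \Xi_G \;=\; \sum_{i=1}^{2|Z(D_m)|+[\frac{m-1}{2}]} k_i^2,
\]
as required. The main obstacle I anticipate is the degree-$2$ Schur step at the block level: one must check cleanly that the condition $(I_{k_i}\otimes \rho_i(g))^t\, X^{ii}_{2k_i}\,(I_{k_i}\otimes \rho_i(g)) = X^{ii}_{2k_i}$ forces each $2\times 2$ sub-block to be a scalar multiple of $I_2$ (by applying Schur sub-block by sub-block), and conversely that every such scalar-block matrix does satisfy the invariance condition against both rotations $\rho_i(a^s)$ and reflections $\rho_i(a^s b)$. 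Once this is in hand, the count $k_i^2$ per block and the final summation are immediate.
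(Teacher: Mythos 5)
Your proposal is correct and follows essentially the same route as the paper: both reduce to the decomposition $\Xi_G' = \bigoplus_{i=1}^{r}\mathbb{W}_{(G,k_i\rho_i)}$ from Theorem \ref{theorem1.2} and then count $k_i^2$ free parameters in each block. The only difference is that you supply the justification (triviality of the invariance condition for the degree-$1$ blocks, and orthogonality plus Schur's lemma forcing each $2\times 2$ sub-block to be a scalar multiple of $I_2$ for the degree-$2$ blocks) that the paper leaves implicit in Remark \ref{remark}, which is a welcome strengthening rather than a divergence.
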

\begin{proof}
For every $X \in \Xi_{G}'$, we have $X= Diag\big[X^{11}_{d_{1}k_{1}}, X^{22}_{d_{2}k_{2}},......, X^{ii}_{d_{i}k_{i}},....., X^{rr}_{d_{r}k_{r}} \big]$ with $X^{ii}_{d_{i}k_{i}}=  C_{k_{i}\rho_{i}(g)}^{t}X^{ii}_{d_{i}k_{i}}C_{k_{i}\rho_{i}(g)}$, for $1 \leq i \leq r$  and to generate these sub-matrices it needs $k_i^2$ vectors from $\Xi_G'$. Thus the  result follows.	
\end{proof}
\begin{corollary}
	The  space of invariant symmetric bilinear forms  under an n degree representation  $\rho =\oplus_{i=1}^{r}k_{i}\rho_{i}$ of  $D_m$ has dimension  $= \sum_{i=1}^{2|Z(D_m)|+[\frac{m-1}{2}]}\frac{k_{i}(k_{i}+1)}{2}$.
\end{corollary}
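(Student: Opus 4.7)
The plan is to leverage Theorem~\ref{th5.1} together with the explicit block structure of a general element of $\Xi_G'$ recorded in Remark~\ref{remark}, and simply cut out the symmetric part block by block.

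First, I would recall that every $X \in \Xi_G'$ is block-diagonal of the form $X = \mathrm{Diag}\bigl[X^{11}_{d_1k_1}, X^{22}_{d_2k_2}, \ldots, X^{rr}_{d_rk_r}\bigr]$. Since transposition respects this block decomposition, the symmetry condition $X = X^t$ is equivalent to $X^{ii}_{d_ik_i} = (X^{ii}_{d_ik_i})^t$ for each $i$. Hence the symmetric invariant subspace decomposes as a direct sum indexed by $i = 1, \ldots, r$, and it suffices to count the free symmetric parameters in each block separately and then add.

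Next I would split into the two cases that appear in Remark~\ref{remark}. For the degree-one irreducibles ($1 \leq i \leq 2|Z(D_m)|$), the block $X^{ii}_{d_ik_i}$ is a completely arbitrary $k_i \times k_i$ matrix with entries $x^{ii}_{pq}$; the symmetry condition gives the standard count $\tfrac{k_i(k_i+1)}{2}$. For the degree-two irreducibles ($i \geq 2|Z(D_m)|+1$), the block is a $k_i \times k_i$ array of scalar multiples of $I_2$. Because $I_2^t = I_2$, transposing $X^{ii}_{d_ik_i}$ amounts to transposing the underlying $k_i \times k_i$ scalar coefficient matrix $[x^{ii}_{pq}]$, so again the symmetric constraint leaves exactly $\tfrac{k_i(k_i+1)}{2}$ free parameters.

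Summing over all $i$ from $1$ to $r = 2|Z(D_m)| + [\tfrac{m-1}{2}]$ produces the claimed formula. I expect no real obstacle here; the only mildly delicate point is verifying that for the degree-two blocks the symmetry on the $2k_i \times 2k_i$ matrix genuinely reduces to symmetry on the $k_i \times k_i$ scalar coefficient matrix, which follows immediately from $I_2$ being symmetric. Thus the corollary is a direct refinement of Theorem~\ref{th5.1}, replacing $k_i^2$ by $\tfrac{k_i(k_i+1)}{2}$ in each summand.
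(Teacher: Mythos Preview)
Your argument is correct and is exactly what the paper intends: its proof of this corollary is the single line ``Follows from the proof of theorem~\ref{th5.1},'' and you have simply spelled out that deduction block by block using Remark~\ref{remark}. There is nothing to add.
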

\begin{proof}
	Follows  from the proof of theorem $\ref{th5.1}$.
\end{proof}

\begin{corollary}
	The space of invariant skew-symmetric bilinear forms  under an n degree representation  $\rho =\oplus_{i=1}^{r}k_{i}\rho_{i}$ of  $D_m$ has  dimension $=$ $\sum_{i=1}^{2|Z(D_m)|+[\frac{m-1}{2}]}\frac{k_{i}(k_i-1)}{2}$.
\end{corollary}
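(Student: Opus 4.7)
The plan is to mirror the argument used for Theorem \ref{th5.1} and its symmetric analogue, now imposing the additional constraint $X^{t}=-X$. By Theorem \ref{theorem1.2} every $X \in \Xi_{G}'$ is block-diagonal with blocks $X^{ii}_{d_{i}k_{i}}$, so skew-symmetry of $X$ is equivalent to skew-symmetry of each diagonal block. Thus the computation reduces to counting, block by block, the dimension of the subspace of skew-symmetric matrices that also satisfy the invariance equation $X^{ii}_{d_{i}k_{i}} = C_{k_{i}\rho_{i}(g)}^{t}\, X^{ii}_{d_{i}k_{i}}\, C_{k_{i}\rho_{i}(g)}$ for every $g \in G$.

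First, for the degree-$1$ blocks in the range $1 \le i \le 2|Z(D_m)|$, Remark \ref{remark} shows that $X^{ii}_{d_{i}k_{i}}$ is an arbitrary $k_{i}\times k_{i}$ scalar matrix $[x^{ii}_{ab}]$; imposing $x^{ii}_{ba} = -x^{ii}_{ab}$ forces the diagonal to vanish and pairs off the remaining entries, leaving $\binom{k_{i}}{2} = \frac{k_{i}(k_{i}-1)}{2}$ free parameters. Next, for the degree-$2$ blocks $i \ge 2|Z(D_m)|+1$, Remark \ref{remark} presents $X^{ii}_{d_{i}k_{i}}$ in the Kronecker form $Y^{ii}\otimes I_{2}$ where $Y^{ii}$ is a $k_{i}\times k_{i}$ scalar matrix. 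Since $(Y\otimes I_{2})^{t} = Y^{t}\otimes I_{2}$, skew-symmetry of the block is equivalent to $Y^{ii}$ being skew-symmetric, again contributing $\frac{k_{i}(k_{i}-1)}{2}$ parameters. Summing over $1 \le i \le r = 2|Z(D_m)| + [\frac{m-1}{2}]$ yields the asserted total.

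As a cross-check, one may invoke the direct-sum decomposition $\Xi_{G} = \Xi_{G}^{\mathrm{sym}} \oplus \Xi_{G}^{\mathrm{skew}}$, valid since $\mathrm{char}(\mathbb{C})=0$, and subtract the symmetric dimension from Theorem \ref{th5.1}, noting that $k_{i}^{2} - \frac{k_{i}(k_{i}+1)}{2} = \frac{k_{i}(k_{i}-1)}{2}$ term by term. The only mildly non-routine point is confirming that the Kronecker pattern $Y^{ii}\otimes I_{2}$ on a degree-$2$ block genuinely exhausts the invariant matrices; this is precisely what Remark \ref{remark} records, and is obtained by evaluating the invariance equation on the generators $a$ and $b$ of $D_{m}$ using the explicit $2\times 2$ rotation/reflection matrices from Section \ref{section3}. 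Once this structural fact is accepted, the skew-symmetry count is a routine tally, so no substantive obstacle arises.
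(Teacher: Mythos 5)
Your proposal is correct and follows essentially the same route as the paper: the paper's proof simply defers to the block-diagonal free-variable count of Theorem \ref{th5.1}, and you carry out exactly that count after imposing skew-symmetry on each block (using that a degree-$2$ block has the form $Y^{ii}\otimes I_{2}$, so skew-symmetry passes to the $k_i\times k_i$ parameter matrix). Your fleshed-out justification of the Kronecker pattern and the cross-check via $k_i^2-\frac{k_i(k_i+1)}{2}=\frac{k_i(k_i-1)}{2}$ are welcome additions but not a different method.
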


\begin{proof}
	Follows  from the proof of theorem $\ref{th5.1}$.
\end{proof}


\section{Main results \label{proof1.1}}
Here we present the proofs of the  main theorems stated in the Introduction section.
\begin{flushleft}
\textbf{Proof of theorem $\ref{theorem1.1}$}
 Given $G$ is the  dihedral  group of order $2m, m\geq3$  (so 
   $r =[\frac{m-1}{2}]+2|Z(G)|$) and degree of the representation $\rho$	is $n$. Also $d_{i}$ =1 for $1\leq i \leq 2|Z(G)|$ $\&$ $d_{i}=2$ for $2|Z(G)|+1 \leq i \leq [\frac{m-1}{2}]+2|Z(G)|$. Now from equation (\ref{splitn}), we have 
$$ \hspace{1.2cm} k_{1} +.....+ k_{2|Z(G)|} +2k_{2|Z(G)|+1}+.................. +2k_{[\frac{m-1}{2}]+2|Z(G)|} =n. $$
$$ i. e., \hspace{1.2cm} k_{1} +.....+ k_{2|Z(G)|} =n-2(k_{2|Z(G)|+1}+.................. +k_{[\frac{m-1}{2}]+2|Z(G)|}). $$
As $(k_{2|Z(G)|+1}+.................. +k_{[\frac{m-1}{2}]+2|Z(G)|}) \leq [\frac{n}{2}]$,  we have $[\frac{n}{2}]+1$ equations stated below  \\

\begin{equation}
k_{2|Z(G)|+1}+.................. +k_{[\frac{m-1}{2}]+2|Z(G)|}=0.
\end{equation}
\begin{equation}
k_{2|Z(G)|+1}+.................. +k_{[\frac{m-1}{2}]+2|Z(G)|}=1.
\end{equation}
\begin{equation}
k_{2|Z(G)|+1}+.................. +k_{[\frac{m-1}{2}]+2|Z(G)|}=2.
\end{equation}
$$.........................................................................$$
$$.........................................................................$$
$$.........................................................................$$
\begin{equation}
k_{2|Z(G)|+1}+.................. +k_{[\frac{m-1}{2}]+2|Z(G)|}= \Big[\frac{n}{2}\Big].
\end{equation}

The number of distinct solutions to  each of these equations  is $\binom{s+[\frac{m-3}{2}]}{[\frac{m-3}{2}]}$, $0 \leq s \leq [\frac{n}{2}]$.\\
Thus the number of all  distinct    $2|Z(G)|+[\frac{m-1}{2}]$ tuples ($k_{1},....., k_{2|Z(G)|}, ......., k_{2|Z(G)|+[\frac{m-1}{2}]}$) is $\sum_{s=0}^{[\frac{n}{2}]} \binom{s+[\frac{m-3}{2}]}{[\frac{m-3}{2}]}\binom{n-2s+2|Z(G)|-1}{2|Z(G)|-1}$.\\

\end{flushleft}

Thus from equation (\ref{splitn}) and Theorem $\ref{theorem2.2}$ the  number of  $n$ degree representations (upto isomorphism) of the dihedral  group $D_m, m\geq3$  is $\sum_{s=0}^{[\frac{n}{2}]} \binom{s+[\frac{m-3}{2}]}{[\frac{m-3}{2}]}\binom{n-2s+2|Z(G)|-1}{2|Z(G)|-1}$.$\hspace{2.8in}$ $\square$\\

\noindent \textbf{Proof of  theorem $\ref{theorem1.2}$}
 Let $X$ be an element of $\Xi_{G}'$ then we have

\[C_{\rho(g)} ^{t}X C_{\rho(g)}= X \,\,and\,\, X=Diog[X^{11}_{d_{1}k_{1}},X^{22}_{d_{2}k_{2}},  ....... , X^{ii}_{d_{i}k_{i}}, .....,  X^{rr}_{d_{r}k_{r}} ]. \]
 
\noindent Existence:\\
 Let $X \in \Xi_{G}' $ then for  $1\leq i \leq r$,  there exists at least one  $X_{i}=Diog[O^{11}_{d_{1}k_{1}},O^{22}_{d_{2}k_{2}},$  ....... $, X^{ii}_{d_{i}k_{i}}, .....,  O^{rr}_{d_{r}k_{r}} ] \in \mathbb{W}_{(G,k_{i}\rho_{i})}$, such that  $\sum_{i=1}^{r} X_{i} =X$.\\

\noindent Uniqueness: \\
For  $1\leq i \leq r$, suppose there exists  $Y_{i} \in \mathbb{W}_{(G,k_{i}\rho_{i})} $, such that   $\sum_{i=1}^{r} Y_{i} =X$, then  $\sum_{i=1}^{r} X_{i} $ = $\sum_{i=1}^{r} Y_{i}$ i.e,  $Y_{j}-X_{j}$= $\sum_{i=1, i\neq j}^{r} (X_{i}-Y_{i})$. Therefore $Y_{j}-X_{j} \in $ $\sum_{i=1, i\neq j}^{r} \mathbb{W}_{(G,k_{i}\rho_{i})}  $ hence  $Y_{j}-X_{j}$ = O or $Y_{j} = X_{j}$ for all $j$.\\Thus we have 
\begin{equation}
 \Xi_{G}' =\oplus_{i=1}^{r} \mathbb{W}_{(G,k_{i}\rho_{i})} \hspace{0.1cm} and \hspace{0.1cm} dim(\Xi_{G}') = \sum_{i=1}^{r} dim(\mathbb{W}_{(G,k_{i}\rho_{i})}).
\label{wg}
\end{equation}

\begin{flushleft}
Now 
as $\mathbb{W}_{(G,k_{i}\rho_{i})} = \{ X \in \mathbb{M}_{n}(\mathbb{C})\, |\, X = Diag[O^{11}_{d_{1}k_{1}}, . ....... ....,  X_{d_{i}k_{i}}^{ii},  ... ..........,  O^{rr}_{d_{r}k_{r}}   ]$ with  $X_{d_{i}k_{i}}^{ii}$  a square sub - matrix of order $d_{i}k_{i} $ satisfies $X_{d_{i}k_{i}}^{ii}=  C_{k_{i}\rho_{i}(g)}^{t}X_{d_{i}k_{i}}^{ii}C_{k_{i}\rho_{i}(g)}$, $\forall g \in G$ \},
 from the remark  \ref{remark4.1}, we see that for  $1\leq i \leq 2|Z(G)|+[\frac{m-1}{2}]$, the sub-matrices  $X_{d_{i}k_{i}}^{ii}$ in  $\mathbb{W}_{(G,k_{i}\rho_{i})} $  have $k_i^{2}$ free variables $\&$  $\mathbb{W}_{(G,k_{i}\rho_{i})} \cong $ $\mathbb{M}_{k_i}(\mathbb{C})$. Thus $\Xi_{G}'\cong $ $\oplus_{i=1}^{r}\mathbb{M}_{k_1}(\mathbb{C})$ and $dim(\mathbb{W}_{(G,k_{i}\rho_{i})} )= k_{i}^{2}$.
\end{flushleft}


Thus  substituting this in equation  (\ref{wg}) we get the dimension of $\Xi_{G}'$. \\

\noindent \textbf{Proof of  theorem $\ref{theorem1.3}$} Follows immediately from  Lemmas $\ref{singular}$ to $\ref{lemma43}$ .$\hspace{2.5 in}$ $\square$\\

\noindent \textbf{Proof of  theorem $\ref{thm1.4}$}
Follows immediately from the corollaries $\ref{corfaithful}$ and $\ref{corirred}$.
\subsection{Degenerate invariant  spaces} \noindent From  Theorem $\ref{theorem1.1}$ and Lemma $\ref{nondeg}$, for every $n \in \mathbb{Z^+}$, an $n$ degree representation of $D_m$ has a non- degenerate invariant bilinear forms. If the block-diagonal matrix $X_{d_i k_i}^{ii}$ of $X \in \Xi'_{G}$ is singular at least for one $i$ then the invariant bilinear form $X$ is singular. \\

\vskip2mm
Thus here we have completely characterised the representations of a dihedral group of order $2m$, $m \geq 3$ to admit a non-degenerate invariant bilinear form over complex  field. The authors hope to return in future to the same work over an arbitrary field and for some different finite groups.
\vskip2mm
\noindent {\bf Funding}: Not applicable.
\vskip2mm
\noindent {\bf Conflicts of interest/Competing interests}: Not applicable.
\vskip2mm
\noindent {\bf Availability of data and material}: The manuscript has no associated data.
\vskip2mm
\noindent {\bf Code availability}: Not applicable.
\vskip2mm
\noindent {\bf Acknowledgement} The first author would like to thank UGC, India for providing the research fellowship and the Central University of Jharkhand, India for support to carry out this research work. The second author is thankful to the Babasaheb Bhimrao Ambedkar university for providing excellent environment to finalize this work.

\section*{References}

\end{document}